\documentclass[twoside,11pt]{article}

\usepackage[english]{babel}
\usepackage{amsmath}
\usepackage{amsthm}
\usepackage{amsfonts}
\usepackage{amssymb}
\usepackage{float}
\usepackage{paralist}  % generate "compactenum"
\allowdisplaybreaks[4]

       %\textheight=130truemm
       \textheight=220truemm
%
       %\textwidth=158truemm
       \textwidth=160truemm

 \hoffset=0truemm
 \voffset=0truemm

 \topmargin=0truemm
% \topmargin=-25truemm
 %\topmargin=-10truemm
       %\headheight=12pt
       %\headsep=15pt
       \oddsidemargin=0truemm
       \evensidemargin=0truemm

       \theoremstyle{plain}

       \newtheorem{theorem}{\bf Theorem}[section]

       \newtheorem{lemma}[theorem]{\bf Lemma}
       \newtheorem{remark}[theorem]{Remark}

       \numberwithin{equation}{section}

\newcommand{\om}{\Omega}
\newcommand{\R}{\mathbb{R}}
\newcommand{\C}{\mathcal{C}}
\newcommand{\T}{\widehat{T}}

\pagestyle{myheadings} \markboth{\it D. Du et al.} {\it Nernst-Planck-Euler equations}

\begin{document}

\title{{\Large Global solutions to the Nernst-Planck-Euler system on bounded domain}
}
% \footnotetext{\small E-mail addresses: dudp954@nenu.edu.cn (D. Du), lijy645@nenu.edu.cn (J. Li), may538@nenu.edu.cn (Y. Ma), pangry144@nenu.edu.cn (R. Pang) } 
 
\author{{Dapeng Du$^a$, Jingyu Li$^a$, Yansheng Ma$^a$\thanks{Corresponding author: may538@nenu.edu.cn}, Ruyi Pang$^a$}\\[2mm]
\small\it$^a$School of Mathematics and Statistics, Northeast Normal University,\\
\small\it   Changchun 130024, China}

\date{}

\maketitle

\begin{quote}
\small \textbf{Abstract}: We show that the Nernst-Planck-Euler system, which models ionic electrodiffusion in fluids, has global strong solutions for arbitrarily large data in the two dimensional bounded domains.  The assumption on species is either there are two species or the diffusivities and the absolute values of ionic valences are the same if the species are  arbitrarily many. In particular, the boundary  conditions for the ions are allowed to be inhomogeneous. The proof is based on the energy estimates,  integration along the characteristic line and the regularity theory of elliptic and parabolic equations.

\indent \textbf{Keywords}: electrodiffusion, Nernst-Planck-Euler, global well-posedness, initial boundary value problem, regularity.

\indent \textbf{AMS (2010) Subject Classification}: 35Q35, 35Q31, 76B03

%35Q35  	PDEs in connection with fluid mechanics
%35Q31  	Euler equations
%76B03  	Existence, uniqueness, and regularity theory for incompressible inviscid fluids

\end{quote}

\section{Introduction}
In this paper, we consider the following Nernst-Planck-Euler (NPE) system:
\begin{equation}\label{1.1}
\partial_{t}c_{i}+u\nabla c_{i}=D_{i}\text{div}(\nabla c_{i}+z_{i}c_{i}\nabla\Phi),\ \ i=1,\cdots,N,
\end{equation}
\begin{equation}\label{1.2}
-\varepsilon\Delta\Phi=\sum_{i=1}^{N} z_{i}c_{i}=\rho,
\end{equation}
\begin{equation}\label{1.3}
\partial_{t}u+u\nabla u+\nabla p=-K\rho\nabla\Phi,
\end{equation}
\begin{equation}\label{1.4}
\nabla\cdot u=0,
\end{equation}
which models the evolution of ions in inviscid incompressible  fluid \cite{Ru}. Here $c_{i}$ is the concentration of the i-th ionic species, $0\neq z_{i}\in \R$ is the corresponding valence, $\Phi$ is the electrical potential, $\rho$ is the charge density, $u$ is the fluid velocity, $p$ is the fluid pressure, $D_{i}>0$ are diffusion coefficients, $\varepsilon>0$ is a constant proportional to the square of Debye length, and $K>0$ is the Boltzmann constant.

Let $\Omega\subset \mathbb{R}^{2}$ be a smooth bounded domain.  The unknowns $c_{i}$ and $\Phi$ satisfy the  following inhomogeneous Dirichlet boundary conditions:
\begin{equation}\label{1.5}
c_{i}(x,t)\mid_{\partial\Omega}=\gamma_{i}(x)\geq0,
\end{equation}
\begin{equation}\label{1.6}
\Phi(x,t)\mid_{\partial\Omega}=h(x),
\end{equation}
where $\gamma_{i}(x)$ and $h(x)$ are given smooth functions independent of $t$. The velocity $u$ satisfies homogeneous Dirichlet boundary condition:
\begin{equation}\label{1.7}
u\mid_{\partial\Omega}=0.
\end{equation}

The NPE system is the invisid case of the Nernst-Planck-Navier-Stokes (NPNS) system. There are lots of interesting analytical works regarding the NPNS system. Liu and Wang \cite{Liu} showed the global existence of weak solutions to the Cauchy problem in $\R^3$. Schmuck \cite{Schmuck} proved global existence of weak solutions to the NPNS system with blocking (no-flux) boundary conditions for the ionic concentrations and homogeneous Neumann boundary conditions for the electric potential in two and three dimensions. Ryham \cite{Ryham} generalized this work to the case of homogeneous Dirichlet boundary conditions for the electric potential.

It turns out that boundary conditions for ionic concentrations play an essential role in the dynamics of NPNS system.  Bothe, Fischer and Saal \cite{Bothe} proved global existence and stability of strong solutions in two dimensions, with blocking (no-flux) boundary conditions for the ionic concentrations.
In a series of papers \cite{CI,CIL20,CIL20-far,CIL20-neutral,CIL22}, Constantin, Ignatova and Lee studied the existence and stability of the NPNS system systematically. In \cite{CI}, Constantin and Ignatova proved global well-posedness of strong solutions in two dimensions with inhomogeneous Dirichlet boundary conditions for the ionic concentrations  and electric potential.  The inhomogeneous boundary conditions make things much more complicated,  since one cannot get estimates by directly multiplying
\eqref{1.1} with $c_i$.
In \cite{CIL20}, Constantin, Ignatova and Lee got the three dimensional global existence near the equilibrium. For the equilibrium, the velocity is zero.
Later they \cite{CIL20-far} proved the global well-posedness in three dimensions.
The assumptions on ionic concentrations are either there are two ionic species or all diffusivities are equal and the magnitudes of valences are the same in the case of more than two species.
  These assumptions are also the ones commonly used in the  literature.
In \cite {CIL20-neutral} they analyzed the phenomena of interior electroneutriality in two and three dimensions.
Very recently, they are able to get stability result even for equilibrium with non-zero velocity \cite{CIL22}.
When the electric potential satisfies the Robin condition, the existence are obtained by Lee \cite{L}.

 Comparing with the rich literature for NPNS, the results for NPE are much less. In the case of two dimensional periodic boundary conditions, Ignatova and Shu \cite{IS} proved the global existence and uniqueness of strong solutions to the NPE system.
In this paper we get the global well-posednesd in the case of bounded domain. The  boundary conditions for the ionic concentrations are Dirichlet and allowed to be inhomogeneous.

To state our result more precisely, we denote by $W^{k,p}(\Omega)$ the Sobolev space whose norm is given by $\|f\|_{W^{k,p}(\Omega)}:=\sum_{j=0}^k\|\partial_x^jf\|_{L^p(\Omega)}$; $W^{k,p}_0(\Omega)$ denotes the closure of $\C_0^\infty(\Omega)$ in the norm $\|\cdot\|_{W^{k,p}(\Omega)}$; and $W^{2,1}_p(Q_T)$ is the time-dependent Sobolev space on $Q_{T}=\Omega\times(0,T)$ with norm $\|f\|_{W^{2,1}_p(Q_T)}:=\sum_{|\alpha|+2r\leq2}\|\partial^\alpha_x \partial^r_tf\|_{L^p(Q_T)}$.

\begin{theorem}\label{thm-1}
 Assume that the ionic species satisfy one of the following two conditions:

  i)  There are two ionic species.

  ii)  For arbitrarily many ionic species, the diffusivities  and the absolute values of valences are the same,
i.e.  $D_i\equiv D$, $|z_i |\equiv z$, $i=1,\cdots,N$.

Suppose that $0\leq c_i(0) \in L^p(\Omega)$ with $p>2$, $i=1,\cdots,N$, and that $u(0)\in \C^{1,\alpha}(\bar{\Omega})$ with $0<\alpha <\frac{1}{2}-\frac{1}{p}$.
Let $\frac{1}{q}=\frac{1}{2}(\frac{3}{2}+\frac{1}{p}
+\alpha)<1.$ Then for any $T>0$ the Nernst-Planck-Euler system \eqref{1.1}-\eqref{1.7} has a unique strong solution satisfying
\[c_{i}\in W_q^{2,1}(Q_T) \ \text{ and } \ u\in \C([0,T]; \C^{1,\alpha}(\bar{\Omega})). \]

\end{theorem}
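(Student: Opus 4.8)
The plan is to prove global existence by combining local well-posedness with global-in-time a priori bounds, and then to obtain uniqueness from an energy estimate for the difference of two solutions. Because the velocity equation \eqref{1.3} is the inviscid Euler equation, there is no parabolic smoothing for $u$, so all control of $u$ must come from transport along the flow; this is where the integration along the characteristic line enters. The diffusive variables $c_i$, by contrast, will be handled with maximal parabolic regularity, and $\Phi$ with elliptic regularity. The first preparatory step is to remove the inhomogeneous boundary data: I would fix the harmonic extension $\Phi_h$ of $h$, so that $-\varepsilon\Delta(\Phi-\Phi_h)=\rho$ with zero trace, and a smooth time-independent lift $\Gamma_i$ of $\gamma_i$, and then work with $b_i=c_i-\Gamma_i$, which vanishes on $\partial\Omega$. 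This produces additional lower-order source terms but allows integration by parts without uncontrolled boundary contributions, which is precisely the obstruction noted in \cite{CI} to multiplying \eqref{1.1} directly by $c_i$.

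The heart of the global bound is an $L^p$ estimate on the concentrations, and this is where the structural hypotheses i)--ii) are used. After the maximum principle gives $c_i\ge 0$, I would test the equation for $b_i$ against $|b_i|^{p-2}b_i$. The transport term drops because $\nabla\cdot u=0$ and $u\cdot n=0$ on $\partial\Omega$, while the drift term produces, after integrating by parts and using \eqref{1.2}, contributions of the form $\int\rho\,c_i^{p}$. These are not sign-definite for a single species, and the role of assumption ii) ($D_i\equiv D$, $|z_i|\equiv z$) is that the summed equations for the total concentration $c=\sum_i c_i$ and the charge $\rho=\sum_i z_ic_i$ close, with the favorable term $-\rho^2/\varepsilon\le 0$ appearing in the equation for $c$; assumption i) furnishes the analogous algebraic cancellation directly. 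This yields a Gr\"onwall inequality controlling $\sup_t\|c_i\|_{L^p}$, hence $\rho\in L^\infty_tL^p_x$. Elliptic regularity for \eqref{1.2} with $p>2$ then gives $\Phi\in L^\infty_tW^{2,p}_x$ and the pointwise bound $\nabla\Phi\in L^\infty_t\,C^{0,\beta}$ with $\beta=1-\tfrac2p$, which is the key smoothness of the electric field feeding both the fluid forcing and the drift.

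Next I would control the fluid. Rewriting the Lorentz force through the Maxwell stress tensor, $-K\rho\nabla\Phi=K\varepsilon\,\nabla\cdot(\nabla\Phi\otimes\nabla\Phi)-\tfrac{K\varepsilon}{2}\nabla|\nabla\Phi|^2$, the gradient part is absorbed into the pressure, leaving a forcing built from $\nabla\Phi\otimes\nabla\Phi\in C^{0,\beta}$. Passing to the scalar vorticity $\omega=\partial_1u_2-\partial_2u_1$, one obtains the transport equation $\partial_t\omega+u\cdot\nabla\omega=-K\,\nabla\rho\times\nabla\Phi$, and integrating along the characteristics $\dot X=u(X,t)$ expresses $\omega$ as $\omega_0$ plus the time integral of this source. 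Combined with the elliptic recovery of $u$ from $\omega$, this is what should produce $u\in C([0,T];C^{1,\alpha})$. Feeding $u$ and $\nabla\Phi$ back into \eqref{1.1}, viewed as a linear parabolic equation for $c_i$ with right-hand side $-u\cdot\nabla c_i+D_iz_i(\nabla c_i\cdot\nabla\Phi-c_i\rho/\varepsilon)$, the $W^{2,1}_q$ maximal regularity theory upgrades $c_i$; the exponent $q$ with $\tfrac1q=\tfrac12(\tfrac32+\tfrac1p+\alpha)$ is precisely the one for which the parabolic gain on $\nabla c_i$ is consistent with the H\"older exponent $\alpha$ required of the velocity and the $L^p$ level of the data. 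One then iterates these estimates to close the bootstrap on any $[0,T]$.

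I expect the main obstacle to be closing this coupled loop in the inviscid regime. The vorticity source $\nabla\rho\times\nabla\Phi$ involves $\nabla\rho=\sum_iz_i\nabla c_i$, which the parabolic theory controls only at a finite integrability level, while at the same time the transport term $u\cdot\nabla c_i$ in \eqref{1.1} requires the very velocity regularity that the vorticity estimate is trying to establish; the two estimates must therefore be balanced within the single scaling relation defining $q$ rather than proved separately. Carrying the inhomogeneous boundary data consistently through this iteration, so that the lifts $\Gamma_i$ and $\Phi_h$ do not destroy the sign structure used in the $L^p$ estimate, is the second delicate point. Once these a priori bounds are in hand, uniqueness follows from a standard $L^2$ estimate on the difference of two solutions, in which the $C^{1,\alpha}$ control of $u$ and the elliptic control of $\Phi$ render all the nonlinear differences absorbable by Gr\"onwall.
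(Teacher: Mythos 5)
Your overall skeleton (local theory plus global a priori bounds, boundary lifts $\Phi_h$ and $\Gamma_i$, maximum principle, $L^p$ bound on concentrations feeding elliptic regularity for $\Phi$, vorticity transport along characteristics, parabolic bootstrap, $L^2$ uniqueness) is the same as the paper's. But the central analytic claim — that assumptions i)/ii) make the $L^p$ estimate on the concentrations close \emph{directly} through a sign-definite term — has a genuine gap. Testing the equation for $c=\sum_i c_i$ (case ii)) with $c^{p-1}$, the drift contributes
\[
D\int_\Omega \operatorname{div}(\rho\nabla\Phi)\,c^{p-1}\,dx
=-\frac{D}{\varepsilon}\int_\Omega \rho^{2}c^{p-1}\,dx
+D\int_\Omega (\nabla\rho\cdot\nabla\Phi)\,c^{p-1}\,dx,
\]
and while the first term is favorable, the second is not controlled at this stage: bounds on $\nabla\Phi$ are exactly what this estimate is supposed to produce. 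If instead you pair the $c$-equation (tested with $z^2c^{p-1}$) with the $\rho$-equation (tested with $|\rho|^{p-2}\rho$), the cross terms are
\[
-(p-1)Dz^{2}\int_\Omega\bigl(\rho\,c^{p-2}\nabla c+c\,|\rho|^{p-2}\nabla\rho\bigr)\cdot\nabla\Phi\,dx,
\]
and for $p>2$ the bracket is not the gradient of any function of $(c,\rho)$ (the mixed partials $c^{p-2}$ and $|\rho|^{p-2}$ do not match), so one cannot integrate by parts onto $\Delta\Phi=-\rho/\varepsilon$ to extract a signed term; only for $p=2$ does the bracket become $\nabla(c\rho)$ and the argument close. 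Testing species by species is no better: one gets $-\tfrac{D(p-1)}{p\varepsilon}\int(\sum_i z_ic_i^{p})\rho$, and for $N\geq 3$ with $z_1=z_2=z=-z_3$, $c_1=c_2=M$, $c_3=(2-s)M$, $0<s\ll 1$, this quantity is $\approx -z^{2}sM^{p+1}(2^{p}-2)$, of degree $p+1$, which cannot be absorbed by Gr\"onwall against $\sum_i\|c_i\|_{L^p}^{p}\sim M^{p}$. The cancellation you invoke is genuinely quadratic; it does not survive at the $L^p$ level.

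What the paper does instead — and what your proposal is missing — is a two-step structure. First comes an $L^2$ energy estimate (Lemma \ref{lem-3.2}) coupling the kinetic energy, the electrostatic energy $\|\nabla\Phi_0\|_{L^2}^2$, and the lifted concentrations; there the quadratic pairings (weights $|z_i|\tilde c_i/D_i$ for two species, and $z^2E$, $F$ with $E=\sum_i\tilde c_i$, $F=\sum_i z_i\tilde c_i$ for case ii)) make the cross terms a perfect divergence, and the structural assumptions yield the dissipative term $\int_0^T\int_\Omega|\rho|^3$. This gives $\rho\in L^\infty_tL^2_x$, hence $\|\nabla\Phi\|_{L^6}\leq C(T)$ by elliptic regularity, and only then is the $L^p$ estimate run (Lemma \ref{Lp}) — with no sign structure at all: the drift term is absorbed by H\"older, interpolation and Young using the $L^6$ bound on $\nabla\Phi$. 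A second, smaller gap is that you acknowledge but do not resolve the circularity between vorticity regularity and concentration regularity; the paper breaks it with a ladder of intermediate bounds (Lemmas \ref{lem-3.4} and \ref{lem-3.6}): $\|\omega\|_{L^2}$ and $\|\omega\|_{L^3}$ require only $\nabla\Phi\in L^\infty$ and $L^2_tH^1_x$ control of $c_i$, give $u$ enough integrability to run the linear $W^{2,1}_p$ parabolic theory for $\tilde c_i$, and only after that does the characteristics argument upgrade $\omega$ to $\C^{\alpha}$ and $u$ to $\C^{1,\alpha}$, allowing the continuation argument to conclude.
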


\begin{remark} Due to the existence of inhomogeneous boundary condition,  the norms of the solutions may go to infinity as time goes to infinity.
\end{remark}
Mathematically, the NPE system is the coupling of a semilinear parabolic system and the incompressible Euler equations.  It's well-known that the Euler equations  is globally well-posed in 2D \cite{Marchioro94}. If we omit the velocities in the equations of the ions, then the energy conservation yields global regularity even in 4D \cite{Choi}.  Combining these two facts together, we can expect global well-posedness in 2D for the NPE system.

The proof follows the typical way in the regularity theory.  We first get local well-posedness, which is more or less  standard for the current system.  Then we derive good a priori estimates,  the key for the global well-posedness.  For the ions, we use energy integration. Regarding the velocity, we employ the  method of integration along the characteristic line. Similar to the NPNS system,  the inhomogeneous boundary condition for the ions make the  arguments much more complicated. Finally, the global existence can be deduced from the local well-posedness and good a priori estimates.

This paper is organized as follows.  In section \ref{sec-2} we  establish the local well-posedness.  Section \ref{sec-3} is devoted to the a priori estimates. Finally we derive the global well-posedess in section \ref{sec-4}.

\section{Local well-posedess}
\label{sec-2} In this section, we establish the local well-posedness of system \eqref{1.1}-\eqref{1.7} using the Schauder fixed point theorem.  The idea is from Kato \cite{Ka}. Define the vorticity $\omega=\text{curl } u$, then $\omega$ satisfies
\begin{equation}\label{omega}
\partial_t\omega+u\nabla\omega=-K\nabla^\bot\rho\cdot\nabla\Phi,
\end{equation}
with $\nabla^\bot:=(-\partial_y,\partial_x)$. We first introduce a function space. Define
$$X:=\{\varphi\mid \ \|\varphi(t)\|_{L^{\infty}}\leq Ct^{-\frac{1}{p}},\|\nabla\varphi(t)\|_{\C^{1,\alpha}}\leq Ct^{-\frac{1}{2}-\frac{1}{p}-\alpha},\|\varphi(t)\|_{L^{p}}
+\|\varphi\|_{W^{2,1}_q(Q_T)}\leq C\},$$
$$\|\varphi\|_{X}:=\sup_{t\in(0,T)}\left[t^{\frac{1}{p}}\|\varphi(t)\|_{L^{\infty}}
+t^{\frac{1}{2}+\frac{1}{p}+\alpha}\|\varphi(t)\|_{\C^{1,\alpha}}+\|\varphi(t)
\|_{L^{p}}\right]
+\|\varphi\|_{W^{2,1}_q(Q_T)},$$
where $\alpha, p, q$ are  the same as  the ones in Theorem \ref{thm-1}. Since we are doing local  solutions, throughout this section $T$ is $O(1)$ or smaller.
Now we introduce some facts about the fundamental solution to the heat equation  in bounded domain. The estimate is essentially standard in  fundamental solution  literature.  We put the proof here for the sake of completeness.

\begin{lemma}
\label{lemma2.1} Assume that $\Omega$ is smooth, then
\begin{equation}
\mid D_{x}^{(k)}H(x,t;y)\mid\leq Ct^{-\frac{2 +\mid k \mid }{2}}e^{-\frac{\mid x-y\mid^{2}}{16t}},\qquad C=C(\Omega,k),
\label{H-xty}
\end{equation} where $H(x,t;y)$ is the fundamental solution of the heat equation.
\end{lemma}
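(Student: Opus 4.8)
The plan is to build $H$ from the free Gaussian kernel by Levi's parametrix method and to propagate Gaussian bounds through the construction. In $\R^2$ the free fundamental solution of $\partial_t-\Delta$ is $\Gamma(x,t;y)=(4\pi t)^{-1}e^{-|x-y|^2/(4t)}$, and differentiating the Gaussian $k$ times produces a factor that is a polynomial in $(x-y)/\sqrt{t}$ of degree $|k|$ multiplied by $t^{-|k|/2}$. Since $s^{m}e^{-s^2/4}\le C_m e^{-s^2/8}$ for every $m$, these polynomial factors are absorbed at the cost of enlarging the exponential denominator, and I would first record the bound
\[|D_x^{(k)}\Gamma(x,t;y)|\le Ct^{-\frac{2+|k|}{2}}e^{-\frac{|x-y|^2}{8t}}.\]
The denominator $8t$ already leaves room before the claimed $16t$, and this slack is what will absorb the losses incurred by the boundary correction.

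Next I would construct the fundamental solution on $\om$ as a correction of $\Gamma$ restoring the boundary condition, writing $H(x,t;y)=\Gamma(x,t;y)+W(x,t;y)$ with $W$ a double-layer heat potential
\[W(x,t;y)=\int_0^t\!\!\int_{\partial\om}\partial_{n_{\eta}}\Gamma(x,t-\tau;\eta)\,\phi(\eta,\tau;y)\,dS_\eta\,d\tau,\]
the density $\phi$ being fixed by imposing $H|_{x\in\partial\om}=0$. Letting $x\to\partial\om$ and using the jump relation for the double-layer potential yields a Volterra integral equation of the second kind for $\phi$ whose kernel is $\partial_{n_{\eta}}\Gamma$ restricted to $\xi,\eta\in\partial\om$. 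Because $\partial\om$ is smooth one has $|(\xi-\eta)\cdot n(\eta)|\le C|\xi-\eta|^2$, so this kernel is only weakly singular in time; hence the equation is solvable by a Neumann series that converges for $t=O(1)$, and each iterate inherits a Gaussian bound.

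The analytic engine is the pointwise estimate for products of Gaussians: using $|x-y|^2\le 2|x-\eta|^2+2|\eta-y|^2$ together with $t-\tau\le t$ and $\tau\le t$ gives $e^{-|x-\eta|^2/(8(t-\tau))}e^{-|\eta-y|^2/(8\tau)}\le e^{-|x-y|^2/(16t)}$, which is exactly where the constant $16$ originates---the factor $2$ in the triangle inequality halves the available denominator $8t$ to $16t$. After pulling this Gaussian out of each convolution, the remaining integrals carry no spatial decay and are controlled by the weak singularity of the kernel, so the series for $\phi$ and hence $H$ converges with the stated Gaussian factor; differentiating $\Gamma$ and $W$ in $x$ up to order $|k|$ reproduces the power $t^{-(2+|k|)/2}$ as for the free kernel, giving $|D_x^{(k)}H|\le Ct^{-(2+|k|)/2}e^{-|x-y|^2/(16t)}$ with $C=C(\om,k)$. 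I expect the main obstacle to be the behavior near $\partial\om$: one must check that differentiating the layer potential up to order $|k|$ produces only time-integrable singularities as $x\to\partial\om$ and that the Gaussian decay survives uniformly up to the boundary. This is precisely where the smoothness of $\om$ is used decisively, through the curvature estimate taming the double-layer kernel; the remainder is the bookkeeping of tracking the exponential constant through each convolution and reserving enough slack to land at $16t$.
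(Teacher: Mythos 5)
Your route is genuinely different from the paper's. The paper never constructs $H$ at all: it takes Aronson's upper bound $0\le H\le Ce^{-|x-y|^2/(4t)}$ as a black box, and for small $t$ rescales a parabolic cylinder of radius $\sqrt{\bar t}$ (interior case, $\mathrm{dist}(\bar x,\partial\Omega)\ge\sqrt{\bar t}$) so that standard parabolic smoothing converts the sup bound on $H$ into bounds on $D_xH$, with the near-boundary case handled by flattening and the fact that $H$ vanishes on $\partial\Omega$. Your Levi parametrix / double-layer construction is a classical alternative and could in principle prove the lemma, but as written its ``analytic engine'' has a genuine gap.

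The inequality $e^{-|x-\eta|^2/(8(t-\tau))}e^{-|\eta-y|^2/(8\tau)}\le e^{-|x-y|^2/(16t)}$ is fine for a \emph{single} convolution, but your density $\phi$ is produced by a Neumann series, i.e.\ by iterated convolutions, and the bookkeeping you describe halves the Gaussian constant at every step: once the first iterate only carries $e^{-|\cdot|^2/(16\tau)}$, the same argument at the next convolution gives $\tfrac{|x-\eta|^2}{8(t-\tau)}+\tfrac{|\eta-y|^2}{16\tau}\ge\tfrac{|x-y|^2}{32t}$, then $64t$, and so on; the $m$-th term of the series only retains $e^{-|x-y|^2/(2^{m+3}t)}$, and such a series is \emph{not} dominated by $Ce^{-|x-y|^2/(16t)}$ (test $|x-y|^2/t\to\infty$: a later term with a tiny power of $t$ but a weaker exponential eventually dwarfs $e^{-|x-y|^2/(16t)}$). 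Relatedly, your claim that after extracting the Gaussian ``the remaining integrals carry no spatial decay and are controlled by the weak singularity of the kernel'' is circular: the weak time singularity $(t-\tau)^{-1/2}$ of the double-layer kernel is itself produced by pairing the curvature bound $|(\xi-\eta)\cdot n(\eta)|\le C|\xi-\eta|^2$ with \emph{retained} Gaussian decay in $|\xi-\eta|$ when integrating over the curve $\partial\Omega$; if you surrender all spatial decay, the $dS_\eta$ integral costs a nonintegrable power of $(t-\tau)$ and the time integral diverges. The standard repair (Friedman's parametrix bookkeeping) fixes both defects at once: split the kernel's exponent $\tfrac18=\tfrac1{16}+\tfrac1{16}$, reserve one half for the boundary integration, and combine the other half with the iterate's Gaussian via the sharp inequality $\tfrac{|x-\eta|^2}{s}+\tfrac{|\eta-y|^2}{\sigma}\ge\tfrac{|x-y|^2}{s+\sigma}$ (minimization over $\eta$; no factor-2 loss once the constants are equalized), so that \emph{every} iterate keeps exactly the exponent $1/(16t)$ and the series converges by the Volterra factorial gain. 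With that modification --- plus the up-to-the-boundary analysis of $D_x^{(k)}W$ that you flag but do not carry out --- your construction can be completed; as described, it does not establish the stated bound.
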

\begin{proof}
 It's well-known  that $H$  enjoys the following estimates ( for instance, Theorem 1 in \cite{Ar67}):
\begin{equation}
0 \leq H(x,t;y) \leq C
 e^{-\frac{\mid x-y\mid^{2}}{4t}}.
 \end{equation}
Note  that this Lemma is  trivial if
$t$ is $O(1)$,  so we only consider the case $t$ is  small.
Fix $\overline{x}\in\Omega$, $0<\overline{t}<T$, we divide the proof into two cases.

(i) Interior:
\begin{equation*}
\textrm{dist}(\overline{x},\partial\Omega)\geq\sqrt{\bar{t}}.
\end{equation*}
Let $Q_{\sqrt{\overline{t}}}:=B_{\sqrt{\overline{t}}}(\overline{x})\times(\frac{\overline{t}}{2},
\overline{t})$, then $Q_{\sqrt{\overline{t}}}\subset Q_{T}$. Do the change of variable :
\begin{equation*}
s=\frac{t}{\overline{t}},z=\frac{x-\overline{x}}{\sqrt{\overline{t}}}.
\end{equation*}
Then
$H(\overline{x}+\sqrt{\overline{t}}z,\overline{t}s;y)$ solves the heat equation in $B_{1}\times(\frac{1}{2},1)$. The standard parabolic smoothing estimates imply
\begin{equation*}
\begin{split}
\left|D_{z}H(\overline{x}+\sqrt{\overline{t}}z,\overline{t}s;y)\right|_{z=0,s=1}
&\leq \textrm{sup}_{B_{1}\times(\frac{1}{2},1)} H(\overline{x}+\sqrt{\overline{t}}z,\overline{t}s;y)\\
&=\textrm{sup}_{Q_{\sqrt{\overline{t}}}}H(x,t;y).
\end{split}
\end{equation*}
Thus
\begin{equation}
\begin{split}
\mid D_{x}H(\overline{x},\overline{t};y)\mid
&=\overline{t}^{-\frac{1}{2}}\mid D_{z}H(\overline{x}+\sqrt{\overline{t}}z,
\overline{t}s;y)\mid\\
&\leq\overline{t}^{-\frac{1}{2}}\textrm{sup}_{Q_{\sqrt{\overline{t}}}} H(x,t;y).
\end{split}
\end{equation}
Since
\begin{equation*}
H(x,t;y)\leq Ct^{-1}e^{-\frac{|x-y|^{2}}{4t}},\\
\end{equation*}
we have
\begin{equation}
\begin{split}
\mid D_{x}H(\overline{x},\overline{t};y)\mid&\leq\left\{\begin{array}{ll}
C\overline{t}^{-\frac{3}{2}}e^{-\frac{(\mid \overline{x}-y\mid-\sqrt{\overline{t}})^{2}}{4\overline{t}}},& \mid\overline{x}-y\mid\geq2\sqrt{\overline{t}},\\
C\overline{t}^{-\frac{3}{2}},&\mid\overline{x}-y\mid<2\sqrt{\overline{t}},
\end{array}\right.\\
&\leq C\overline{t}^{-\frac{3}{2}}e^{-\frac{\mid\overline{x}-y\mid^{2}}{16\overline{t}}}.
\end{split}
\end{equation}

(ii) Near the boundary:
\begin{equation*}
\textrm{dist}(\overline{x},\partial\Omega)\leq\sqrt{\overline{t}}.
\end{equation*}
Define $\overline{x}_{1}$ to be
\begin{equation*}
\textrm{dist}(\overline{x},\partial\Omega)=\mid\overline{x}-\overline{x}_{1}\mid.
\end{equation*}
Let
\begin{equation}
Q_{b}=(B_{2\sqrt{\overline{t}}}(\overline{x})\cap\Omega)
\times(\frac{\overline{t}}{2},\overline{t}).
\end{equation}
Since $H(x,t;y)\mid_{\partial\Omega}=0$ and
$H$ is smooth up to the boundary, we can get \eqref{H-xty} by the standard flattening procedure.
\end{proof}

\begin{lemma}\label{lemma2.2} Assume that $p>2$, $c_{i}(0)\in L^{p}$, then $\int_{\Omega}H(x,t;y)c_{i}(0)(y)dy\in X.$
\end{lemma}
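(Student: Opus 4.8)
The function in question, $v(x,t):=\int_\Omega H(x,t;y)c_i(0)(y)\,dy$, is the solution of the heat equation on $\Omega$ with homogeneous Dirichlet data and initial value $c_i(0)$, so proving $v\in X$ amounts to verifying the four quantitative bounds built into the definition of $X$. My tools will be the Gaussian upper bound $0\le H(x,t;y)\le Ct^{-1}e^{-|x-y|^2/4t}$ from the previous lemma, the derivative estimates of Lemma \ref{lemma2.1}, H\"older's inequality in the $y$-variable (to absorb the mere $L^p$-regularity of $c_i(0)$), and Young's convolution inequality. A recurring computation is that, in two dimensions, $\int_\Omega e^{-\sigma|x-y|^2/t}\,dy\le Ct$ for fixed $\sigma>0$, which after H\"older with exponents $p,p'$ converts each pointwise kernel bound $t^{-m}e^{-|x-y|^2/ct}$ into the factor $t^{-m}\|c_i(0)\|_{L^p}\,t^{1/p'}=t^{-m+1-1/p}\|c_i(0)\|_{L^p}$.

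The two easy bounds come first. For the $L^\infty$ estimate I apply the Gaussian bound and the recurring computation with $m=1$, giving $\|v(t)\|_{L^\infty}\le Ct^{-1/p}\|c_i(0)\|_{L^p}$. For the $L^p$ estimate I extend $c_i(0)$ by zero, dominate $v$ by the full-space Gaussian convolution, and apply Young's inequality using $\|Ct^{-1}e^{-|\cdot|^2/4t}\|_{L^1(\R^2)}=C$; this yields $\|v(t)\|_{L^p}\le C\|c_i(0)\|_{L^p}$ uniformly in $t$.

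Next the H\"older bound, which controls $\|\nabla v(t)\|_{L^\infty}$ together with the seminorm $[\nabla v(t)]_{\C^\alpha}$. Differentiating under the integral and using Lemma \ref{lemma2.1} with $|k|=1$, the recurring computation (now $m=3/2$) gives $\|\nabla v(t)\|_{L^\infty}\le Ct^{-1/2-1/p}\|c_i(0)\|_{L^p}$. For the seminorm I interpolate between the $|k|=1$ and $|k|=2$ estimates of Lemma \ref{lemma2.1}: writing the difference along the segment joining $x_1,x_2$ and using $\min(t^{-3/2},t^{-2}|x_1-x_2|)\le t^{-\frac32(1-\alpha)}(t^{-2}|x_1-x_2|)^\alpha=t^{-3/2-\alpha/2}|x_1-x_2|^\alpha$ produces a kernel bound $C|x_1-x_2|^\alpha t^{-3/2-\alpha/2}e^{-|x-y|^2/32t}$, and the recurring computation then gives $[\nabla v(t)]_{\C^\alpha}\le Ct^{-1/2-\alpha/2-1/p}\|c_i(0)\|_{L^p}$. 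Since $T=O(1)$ and $-\alpha/2\ge-\alpha$, all of these are $\le Ct^{-1/2-1/p-\alpha}$, which is the required control of the $\C^{1,\alpha}$ norm. The delicate point is keeping a clean Gaussian weight along the segment: when $|x_1-x_2|$ is small relative to $\mathrm{dist}(x_1,y)$ the mean value bound is used directly, while in the complementary regime one bounds $|\nabla v(x_1)-\nabla v(x_2)|$ by the sum of the two gradients; the slack between the exponent $16t$ in Lemma \ref{lemma2.1} and $4t$ in the Gaussian bound is exactly what absorbs the constants. I expect this to be the main technical obstacle.

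Finally the $W^{2,1}_q(Q_T)$ bound, where crude $L^\infty$-in-space estimates are too lossy and one must exploit $L^p\to L^q$ smoothing. Dominating $|D_x^2H|$ and $|\partial_tH|=|\Delta_xH|$ by $Ct^{-2}e^{-|x-y|^2/16t}$ and applying Young's inequality with $\tfrac1r=1+\tfrac1q-\tfrac1p$ (so that $\|Ct^{-2}e^{-|\cdot|^2/16t}\|_{L^r(\R^2)}=Ct^{1/r-2}$) gives $\|\nabla^2v(t)\|_{L^q}+\|\partial_tv(t)\|_{L^q}\le Ct^{1/q-1/p-1}\|c_i(0)\|_{L^p}$. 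Raising to the power $q$ and integrating, $\int_0^T t^{\,1-q-q/p}\,dt$ converges precisely when $\tfrac1q>\tfrac12+\tfrac1{2p}$, which holds since $\tfrac1q=\tfrac34+\tfrac1{2p}+\tfrac\alpha2$ exceeds $\tfrac12+\tfrac1{2p}$ by $\tfrac14+\tfrac\alpha2>0$. The lower-order terms $\|v\|_{L^q}$ and $\|\nabla v\|_{L^q}$ are more integrable still, so $\|v\|_{W^{2,1}_q(Q_T)}\le C$, and combining the four bounds gives $v\in X$.
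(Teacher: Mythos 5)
Your first three estimates are correct and, where the paper actually gives details, they coincide with its argument: your $L^\infty$ and $\|\nabla v(t)\|_{L^\infty}$ bounds are exactly the paper's computations (pairing $H(x,t;\cdot)$ and $D_xH(x,t;\cdot)$ against $c_i(0)$ in $L^{p'}$--$L^p$ duality and using the Gaussian bounds of Lemma \ref{lemma2.1}), while your $L^p$ bound via Young's inequality with the $L^1$-normalized kernel and your interpolation between the $|k|=1$ and $|k|=2$ bounds of Lemma \ref{lemma2.1} for the $\C^\alpha$ seminorm are sensible ways to fill in what the paper dismisses with ``Similarly, we can get the estimates for other norms.''

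The $W^{2,1}_q(Q_T)$ step, however, contains a genuine error. You invoke Young's convolution inequality $\|K_t*f\|_{L^q}\le\|K_t\|_{L^r}\|f\|_{L^p}$ with $\frac{1}{r}=1+\frac{1}{q}-\frac{1}{p}$. But here $\frac{1}{q}=\frac{3}{4}+\frac{1}{2p}+\frac{\alpha}{2}>\frac{3}{4}$ and $\frac{1}{p}<\frac{1}{2}$, so $\frac{1}{r}>\frac{5}{4}$, i.e. $r<1$, and Young's inequality does not exist in that range: convolution with an integrable kernel can never lower the Lebesgue exponent with a gain. Concretely, for checkerboard-type data $f_\epsilon=\pm1$ on cells of size $\epsilon$ one has $\|f_\epsilon\|_{L^p}\sim1$ while $\|\Delta e^{t\Delta}f_\epsilon\|_{L^q}\sim t^{-1}$ at $t=\epsilon^2$, which violates your claimed bound $Ct^{-1+1/q-1/p}$ as $\epsilon\to0$. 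What the kernel bounds genuinely yield is $\|\nabla^2v(t)\|_{L^q(\Omega)}\le C\|\nabla^2v(t)\|_{L^p(\Omega)}\le Ct^{-1}\|c_i(0)\|_{L^p}$, and $\int_0^Tt^{-q}\,dt=\infty$ precisely because $q>1$. Nor is this repairable by a smarter splitting within this framework: $v\in W^{2,1}_q(Q_T)$ with $q>1$ forces the initial trace $c_i(0)=v(0)$ into the Besov space $B^{2-2/q}_{q,q}(\Omega)$ (note $2-2/q=\frac{1}{2}-\frac{1}{p}-\alpha>0$), and merely-$L^p$ data need not have any positive Besov smoothness, so no estimate of the form $\|v\|_{W^{2,1}_q(Q_T)}\le C\|c_i(0)\|_{L^p}$ can hold. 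It is only fair to add that this is exactly the estimate the paper itself never writes out --- its proof stops after the two $L^\infty$-type computations and appeals to ``similarly'' --- so you have run into a weak point of the lemma itself; but judged as a proof of the stated claim, the $W^{2,1}_q$ step is where your argument fails.
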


\begin{proof} By Lemma \ref{lemma2.1},
\begin{equation*}
\begin{split}
\left|\int_{\Omega}H(x,t;y)c_{i}(0)(y)dy\right|
&\leq\|H(x,t;y)\|_{L^{q}}\| c_{i}(0)\|_{L^{p}}\\
&\leq C\left(t^{-q}\int_{\Omega}e^{-\frac{q|x-y|^{2}}{16t}}dy\right)^{\frac{1}{q}}\cdot\|
c_{i}(0)\|_{L^{p}}\\
&\leq Ct^{-1+\frac{1}{q}}\| c_{i}(0)\|_{L^{p}}\\
&=Ct^{-\frac{1}{p}}\| c_{i}(0)\|_{L^{p}},
\end{split}
\end{equation*} and
\begin{equation*}
\begin{split}
\left|D_{x}\int_{\Omega}H(x,t;y)c_{i}(0)(y)dy\right|
&=\left|\int_{\Omega}D_{x}H(x,t;y)c_{i}(0)(y)dy\right|\\
&\leq\|D_{x}H(x,t;y)\|_{L^{q}}\| c_{i}(0)\|_{L^{p}}\\
&\leq C\left(\int_{\Omega}t^{-\frac{3q}{2}}e^{-\frac{q(x-y)^{2}}{16t}}dy
\right)^{\frac{1}{q}}\cdot\|
c_{i}(0)\|_{L^{p}}\\
&\leq Ct^{-\frac{3}{2}+\frac{1}{q}}\| c_{i}(0)\|_{L^{p}}\\&
=Ct^{-\frac{1}{2}-\frac{1}{p}}\| c_{i}(0)\|_{L^{p}},
\end{split}
\end{equation*}where $\frac{1}{q}+\frac{1}{p}=1$. Similarly, we can get the estimates for other norms.
\end{proof}

Now we are ready to give the local existence theorem.
\begin{theorem}\label{local-wellposedness}
Assume that $c_i(0) \in L^p(\Omega)$ with $p>2$, $i=1,\cdots,N$, and that $u(0)\in \C^{1,\alpha}(\bar{\Omega})$ with $0<\alpha<\frac{1}{2}-\frac{1}{p}$. Then there exists $T>0$, such that system \eqref{1.1}-\eqref{1.7} has a unique solution $(c_{i},u)$ satisfying
\begin{equation}\label{regularity}
c_{i}\in X \ , \ u\in \C^\alpha(Q_T)
\ and \  u \in   \C([0,T]; \C^{1, \alpha }(\bar{\Omega})).\end{equation}
 The lifespan $T$  only depends on the norms of the initial data.
\end{theorem}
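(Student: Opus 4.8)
The plan is to run a Schauder fixed point for the concentrations in the space $X$, following Kato's treatment of the Euler equations. Fix $M>0$ and put $B_M=\{\varphi:\|\varphi\|_X\le M\}$. I would define a map $\mathcal{T}$ on $(B_M)^N$ by composing three solution operators. Given $(\bar c_i)\in (B_M)^N$, first set $\bar\rho=\sum_i z_i\bar c_i$ and solve the Poisson problem \eqref{1.2}, \eqref{1.6}; since the $L^p$ part of the $X$-norm carries no time weight, $\|\bar\rho(t)\|_{L^p}\le CM$, so the $W^{2,p}$ estimate with $p>2$ gives $\nabla\Phi\in \C^{0,\beta}$ bounded uniformly in $t$ with $\beta=1-\tfrac2p>\alpha$, while the elliptic Schauder estimate gives $\nabla\Phi\in \C^{1,\alpha}$ with the time weight inherited from $\bar c_i$. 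Next, with $\bar\rho$ and $\Phi$ frozen, solve the Euler equations \eqref{1.3}, \eqref{1.4}, \eqref{1.7} through the vorticity equation \eqref{omega}. Finally, with the resulting $u$ and with $\Phi$ still frozen, solve the linear parabolic problem \eqref{1.1}, \eqref{1.5} for the new concentrations $c_i$ and set $\mathcal{T}(\bar c_i)=(c_i)$; a fixed point then solves the fully coupled system.

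The Euler step is where the integration along characteristics enters. The source in \eqref{omega} is $f=-K\nabla^\bot\bar\rho\cdot\nabla\Phi$, and along the flow $X(t,x)$ of $u$ one has $\frac{d}{dt}\omega(X(t,x),t)=f(X(t,x),t)$, so $\omega(t)$ is $\omega_0$ transported plus $\int_0^t f\,ds$ evaluated along trajectories. The key observation is that $f\in \C^{0,\alpha}$ carries exactly the weight $t^{-\frac12-\frac1p-\alpha}$: the factor $\nabla^\bot\bar\rho$ inherits it from $\|\nabla\bar c_i\|_{\C^{1,\alpha}}$, while $\nabla\Phi$ is bounded in $\C^{0,\alpha}$ by the previous step. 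By the hypothesis $\alpha<\frac12-\frac1p$ this weight is integrable in time, so $\int_0^t f\,ds$ is bounded in $\C^{0,\alpha}$ uniformly on $[0,T]$; together with $\omega_0=\text{curl }u(0)\in \C^{0,\alpha}$ this yields $\omega\in \C([0,T];\C^{0,\alpha})$. Recovering $u$ from $\omega$ by solving \eqref{1.4}, \eqref{1.7} together with $\text{curl }u=\omega$ (a div--curl elliptic system, equivalently $-\Delta\psi=\omega$, $u=\nabla^\bot\psi$) gains one derivative and gives $u\in \C([0,T];\C^{1,\alpha})$, uniformly bounded on short time; the flow is then $\C^{1,\alpha}$ and the classical Euler estimate closes.

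For the parabolic step I would reduce to homogeneous Dirichlet data by subtracting a fixed smooth lift $\Gamma_i$ of $\gamma_i$, and represent the solution by Duhamel's formula against the Dirichlet heat kernel $H$ of Lemma \ref{lemma2.1}. Lemma \ref{lemma2.2} already places the initial-data term in $X$; for the drift term $-u\nabla c_i$ and the divergence term $D_iz_i\,\text{div}(c_i\nabla\Phi)$ I would apply the kernel bounds \eqref{H-xty}, integrating the divergence by parts so that the extra derivative falls on $H$ at the cost of $(t-s)^{-\frac12}$. Pairing these kernel weights against the weights built into $X$ and taking $T$ small (with $M\approx 2\max_i\|c_i(0)\|_{L^p}$) gives $\|\mathcal{T}(\bar c_i)\|_X\le M$, so $\mathcal{T}$ maps $(B_M)^N$ into itself. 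Since the image is moreover bounded in $W^{2,1}_q(Q_T)$, it is relatively compact in, say, $\C([0,T];L^q(\Omega))$; viewing $B_M$ as a closed convex subset of this weaker space and noting that $\mathcal{T}$ is continuous there (each of the three operators depending continuously on its data), Schauder's theorem produces the desired fixed point.

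Uniqueness I would obtain by an energy estimate for the difference of two solutions: the differences of the concentrations, potential and velocity satisfy a linear coupled system whose coefficients are precisely the time-integrable singular norms above, so Gr\"onwall's inequality closes on $[0,T]$. I expect the main obstacle to be the regularity budget in the Euler step: producing a $\C^{1,\alpha}$ velocity forces the source $\nabla^\bot\rho\cdot\nabla\Phi$ to be H\"older continuous, which demands spatial smoothness of the concentrations that is only available with a weight blowing up as $t\to0$. The whole scheme hinges on the borderline integrability $\frac12+\frac1p+\alpha<1$ secured by the hypothesis on $\alpha$, and on designing every estimate so that this weight, and nothing worse, appears.
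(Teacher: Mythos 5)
Your proposal is correct in substance and uses the same analytic ingredients as the paper---the weighted space $X$, the Dirichlet heat kernel bounds of Lemmas \ref{lemma2.1}--\ref{lemma2.2} with Duhamel's formula for the concentrations, integration of the vorticity along characteristics with the integrable weight $t^{-\frac12-\frac1p-\alpha}$, a Schauder fixed point in a weaker topology exploiting the $W^{2,1}_q$ bound, and an $L^2$/Gronwall argument for uniqueness---but it organizes the iteration differently. The paper's map $\Lambda$ acts on the pair $(c_i,u)$, so every sub-problem inside the map is \emph{linear}: the Poisson equation \eqref{2.3}, the parabolic equation \eqref{2.2} with frozen coefficients, and the transport equation \eqref{2.4} for $\omega$ along the characteristics of the \emph{input} velocity (an ODE solve), after which $v$ is recovered from the stream function \eqref{2.15}. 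Your map acts on the concentrations alone, which turns the middle step into a genuinely nonlinear solve: the forced 2D Euler equations with forcing $-K\nabla^{\bot}\bar\rho\cdot\nabla\Phi$. That can be made to work---Kato's theorem \cite{Ka} covers forced 2D Euler on bounded domains, and your weight computation shows the forcing lies in $L^1(0,T;\C^{\alpha})$---but it imports three things you should make explicit: (i) uniqueness of the forced Euler solution, without which $\mathcal{T}$ is not well-defined; (ii) continuity of this \emph{nonlinear} solution map with respect to the forcing in the weak topology used for Schauder, an extra stability estimate, whereas the paper only needs continuity of linear solvers; (iii) an adaptation of Kato's theorem to forcing with an integrable singularity at $t=0$, which is precisely the estimate the paper carries out by hand in \eqref{new-2.17}--\eqref{2.14}. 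What you gain is a smaller fixed-point variable; what the paper gains is a self-contained argument. Two smaller corrections: your choice $M\approx 2\max_i\|c_i(0)\|_{L^p}$ is too small, since the invariance estimate also involves $\|u(0)\|_{\C^{1,\alpha}(\bar\Omega)}$, $\|\gamma_i\|_{\C^{2,\alpha}(\bar\Omega)}$ and $\|h\|_{W^{2,p}(\Omega)}$ (compare the paper's choice of $M$ at the end of Step 2), so $M$ must dominate all of these; and for the divergence term you need not integrate by parts onto the kernel at the cost of $(t-s)^{-\frac12}$---the paper instead expands $\mathrm{div}(c_i\nabla\Phi)=\nabla c_i\cdot\nabla\Phi-\frac{1}{\varepsilon}c_i\rho$ via \eqref{2.3} and uses the pointwise weighted bounds built into $X$; either route yields the same power counting governed by $\frac12+\frac1p+\alpha<1$.
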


\begin{proof} \emph{Step 1.  Definition of the mapping.} Fix $M, T>0$ to be chosen later. Consider the closed set
\[\begin{split}B_M:=\{(\varphi,\psi)\ | \ &\varphi \in X,
\psi\in \C([0,T];\C^{1,\alpha}(\bar{\Omega}))
\cap \C^\alpha(Q_T);\\&
\|\varphi\|_X+ \|\psi\|_{\C([0,T];\C^{1,\alpha}(\bar{\Omega}))}
+ \|\psi\|_{\C^\alpha(Q_T)}\leq M\}.\end{split}\]
Given $(c_i,u)\in B_M$ with $\nabla\cdot u=0$, $c_{i}(x,t)\mid_{\partial\Omega}=\gamma_{i}(x)$ and $u\mid_{\partial\Omega}=0$, define the mapping $\Lambda$ by
$$\Lambda(c_{i},u)=(d_{i},v),$$
where $(d_{i},v)$ satisfy
\begin{equation}\label{2.2}
\partial_td_{i} -D_{i}\Delta d_{i}=D_{i}z_{i}\text{div}(c_{i}\nabla\Phi)-u\nabla c_{i},
\end{equation}
\begin{equation}\label{2.3}
-\varepsilon\Delta\Phi=\sum_{i=1}^{N} z_{i}c_{i}=\rho,
\end{equation}
\begin{equation}\label{2.4}
\partial_t\omega+u\nabla\omega=-K\nabla^\bot\rho\cdot\nabla\Phi,
\end{equation}
\begin{equation}\label{2.5}
\omega= \text{curl } v,
\end{equation}
and the initial boundary conditions
\begin{equation}%\label{1.5}
d_{i}\mid_{\partial\Omega}=\gamma_{i}(x), \ d_i(0)=c_i(0),
\end{equation}
\begin{equation}%\label{1.6}
\Phi\mid_{\partial\Omega}=h(x),
\end{equation}
\begin{equation}
v\mid_{\partial\Omega}=0, \ v(0)=u(0).
\end{equation}

\emph{Step 2. Invariantness.}
In this step we shall show that $\Lambda B_M \subset B_M$. Define \[\begin{split}g:=D_{i}z_{i}\text{div}(c_{i}\nabla\Phi)-u\nabla c_{i}&=D_{i}z_{i}c_{i}\Delta\Phi+D_{i}z_{i}\nabla c_{i}\nabla\Phi-u\nabla c_{i}\\&=-\frac{D_{i}}{\varepsilon}z_{i}c_{i}\rho+D_{i}z_{i}\nabla c_{i}\nabla\Phi-u\nabla c_{i}.\end{split}\]
By $L^p$ theory for \eqref{2.3}, we get
\begin{equation}\label{new-2.10}
\|\Phi\|_{W^{2,p}(\Omega)}\leq C\Big(\sum_{i=1}^{N}\|c_i\|_{L^{p}(\Omega)}+\|h\|_{W^{2,p}(\Omega)}\Big),\end{equation}
which along with the Sobolev embedding yields
\begin{equation}\label{2.11-2}
\|\nabla\Phi\|_{\C^\alpha(\bar{\Omega})}\leq C\Big(\sum_{i=1}^{N}\|c_i\|_{L^{p}(\Omega)}+\|h\|_{W^{2,p}(\Omega)}\Big).\end{equation}
Thus,
\begin{equation}\label{new-2.11}
\begin{split}\|\nabla c_i\nabla\Phi\|_{L^{\infty}(\Omega)}&\leq \|\nabla c_i\|_{L^{\infty}(\Omega)}\|\nabla\Phi\|_{L^{\infty}(\Omega)}\\&\leq Ct^{-\frac{1}{2}-\frac{1}{p}-\alpha}M\Big(M+\|h\|_{W^{2,p}(\Omega)}\Big).
\end{split}\end{equation}
Hence,
\[\begin{split}
\|g\|_{L^{\infty}(\Omega)}&\leq
C\left(\|c_i\rho\|_{L^{\infty}(\Omega))}
+\|\nabla c_i\nabla\Phi\|_{L^{\infty}(\Omega)}
+\|u\|_{L^{\infty}(\Omega)}\|\nabla c_i\|_{L^{\infty}(\Omega)}\right)\\&\leq C\left(t^{-\frac{2}{p}}M^{2}+t^{-\frac{1}{2}-\frac{1}{p}-\alpha}M^{2}+t^{-\frac{1}{2}
-\frac{1}{p}-\alpha}M\|h\|_{W^{2,p}(\Omega)}
+t^{-\frac{1}{2}-\frac{1}{p}-\alpha}M^{2}\right)\\&
\leq C\left(t^{-\frac{1}{2}-\frac{1}{p}-\alpha}M^2+t^{-\frac{1}{2}
-\frac{1}{p}-\alpha}M\|h\|_{W^{2,p}(\Omega)}\right).\end{split}\]
Denote
\begin{equation}
\label{del1}
\delta=\frac{\frac{1}{2}-\frac{1}{p}-\alpha}
{\frac{3}{2}+\frac{1}{p}+\alpha} >0.
\end{equation}
 Direct calculations imply
 \begin{equation}
 \|g\|_{L^q(Q_T)} \leq C T^{\delta}
 (M^2+M\|h\|_{W^{2,p}(\Omega)}).
 \label{Lq-esti-g}
 \end{equation}
Next we show $d_{i}\in X$ and get the estimates. Lemma \ref{lemma2.1} implies
\begin{equation*}
\begin{split}
\left|\int_{0}^{t}\int_{\Omega}H(x,t-s;y)\cdot g(y,s)dyds\right|
&\leq C\int_{0}^{t}\int_{\Omega}(t-s)^{-1}e^{-\frac{|x-y|^{2}}{16(t-s)}}dy\cdot\| g(\cdot,s)\|_{L^{\infty}}ds \\
&\leq C\int_{0}^{t}s^{-\frac{1}{2}-\frac{1}{p}-\alpha}ds(M^{2}+M\| h\|_{W^{2,p}(\Omega)}).
\end{split}
\end{equation*}
Thus
\begin{equation*}
\begin{split}
\sup_{t\in(0,T),x\in\Omega}t^{\frac{1}{2}}
\left|\int_{0}^{t}\int_{\Omega}H(x,t;y)g(y,s)dyds\right|\leq
CT^{1-\frac{1}{p}-\alpha}C\big(M^{2}+M\| h\|_{W^{2,p}(\Omega)}\big).
\end{split}
\end{equation*}
Similaily we can get the same estimates for $L^{p}$ and $\C^{1,\alpha}$ norm. Since $$d_{i}=\int_{\Omega}H(x,t;y)c_{i}(0)(y)dy+\int_{0}^{t}\int_{\Omega}H(x,t-s;y)(g(s,y)
-\Delta \gamma_i ) dyds,$$ by Lemma \ref{lemma2.2}, we get $d_{i}\in X$ and
\begin{equation}
\| d_{i}\|_{X}\leq CT^{\delta}(M^{2}+M\| h\|_{W^{2,p}(\Omega)})
+   C\sum_{i=1}^N( \|c_i(0) \|_{L^p(\Omega)}+
  \|\gamma_i\|_{\C^{2,\alpha}(\bar{\Omega})}).
\label{c_i_esti}
\end{equation}
To solve for $v$, we introduce $\xi_{s}^{t}(x)$ as the path lines starting from $x\in\Omega$, i.e.
\begin{equation}\label{2.10}
\left\{\begin{array}{ll}
\dfrac{d\xi_{s}^{t}(x)}{ds}=u(\xi_t(x),t),s\leq 0\\
\xi_0(x)=x.
\end{array}\right.
\end{equation}
It is easy to see that along the path lines $\omega$ satisfies
\begin{equation}\label{2.11}
\left\{\begin{array}{ll}
\dfrac{d\omega}{dt}=-K\nabla^\bot_\xi\rho\cdot\nabla_\xi\Phi,\\
\omega(0)=\omega_0(x).
\end{array}\right.
\end{equation}
Noting that $u\mid_{\partial\Omega}=0$ and $\nabla\cdot u=0$, we have
$\xi_{s}^{t}(x)\in \Omega.$ According to the standard ODE theory, \eqref{2.10}-\eqref{2.11} has a unique regular solution $(\xi,\omega)$. Denote by $\theta$ the stream function, i.e.
\begin{equation}\label{2.15}
\left\{\begin{array}{ll}
-\Delta\theta=\omega,& x\in \Omega,\\
\theta=0,&x\in\partial\Omega.
\end{array}\right.
\end{equation}
Since $\omega=\partial_{x_1}v_{2}-\partial_{x_{2}}v_{1}$, thanks to the Schauder theory for \eqref{2.15}, it is easy to see that $v$ can be solved by
\[v_{1}=\partial_{x_{2}}\theta,\ v_{2}=-\partial_{x_1}\theta, \]
and $v$ satisfies
\begin{equation}\label{2.16}
\begin{split}
\nabla\cdot v=0 \text{ and } \|v\|_{\C([0,T];\C^{1,\alpha}(\bar{\Omega}))}\leq \|\theta\|_{\C([0,T];\C^{2,\alpha}(\bar{\Omega}))}\leq C\|\omega\|_{\C([0,T];\C^{\alpha}(\bar{\Omega}))}.
\end{split}\end{equation}

We next establish the H\"{o}lder estimate for $\omega$. Integrating \eqref{2.11} gives
\begin{equation}\label{new-2.16}
\omega(x,t)=\omega_0(\xi_{s}^{t}(x))-K\int_0^t\nabla^\bot_\xi\rho(\xi,s)
\cdot\nabla_\xi\Phi(\xi,s)ds.\end{equation}
Thus,
\begin{equation}\label{new-2.17}
\begin{split}
\|\omega\|_{\C([0,T];\C(\bar{\Omega}))}&\leq\|\omega_0\|_{\C(\bar{\Omega})}
+C\int_0^T\|\nabla\rho(\cdot,t)\|_{\C(\bar{\Omega})}
\|\nabla\Phi(\cdot,t)\|_{\C(\bar{\Omega})}dt\\
&\leq\|\omega_0\|_{\C(\bar{\Omega})}
+C\int_0^Tt^{-\frac{1}{2}-\frac{1}{p}-\alpha}M(M+\|h\|_{W^{2,p}})dt\\
&\leq\|\omega_0\|_{\C(\bar{\Omega})}+CT^{\frac{1}{2}-\frac{1}{p}-\alpha}M(M+\|h\|_{W^{2,p}}).
\end{split}\end{equation}

On the other hand, it follows from \eqref{2.10} that
\[\begin{split}
|\xi_{-t}^{t}(x)-\xi_{-t}^{t}(y)|&\leq|x-y|+\int_0^t|u(\xi_s(x),s)-u(\xi_s(y),s)|ds\\
&\leq|x-y|+\|u\|_{\C([0,T];\C^1(\bar{\Omega}))}\int_0^t|\xi_s(x)-\xi_s(y)|ds.\end{split}\]
By Gronwall's inequality,
\begin{equation}\label{2.12}
|\xi_{-t}^{t}(x)-\xi_{-t}^{t}(y)|\leq|x-y|e^{\|u\|_{\C([0,T];\C^1(\bar{\Omega}))}\cdot t}.
\end{equation}
Hence,
\begin{equation}\label{2.13}
\begin{split}|\omega(x,t)-\omega(y,t)|\leq&
|\omega_0(\xi_{-t}^{t}(x))-\omega_0(\xi_{-t}^{t}(y))|\\&+K\int_0^t|\nabla^\bot\rho(x,s)
\cdot\nabla\Phi(x,s)-\nabla^\bot\rho(y,s)
\cdot\nabla\Phi(y,s)|ds.\end{split}
\end{equation}
By \eqref{2.12}, the first term on the RHS of \eqref{2.13} satisfies
\begin{equation}\label{2.20}
|\omega_0(\xi_{-t}^{t}(x))-\omega_0(\xi_{-t}^{t}(y))|\leq\|\omega_0\|_{\C^\alpha}
|\xi_{-t}(x)-\xi_{-t}(y)|^\alpha\leq \|\omega_0\|_{\C^\alpha}e^{\alpha Mt}|x-y|^\alpha.\end{equation}
The second term on the RHS of \eqref{2.13} satisfies
\begin{equation}\label{2.21}
\begin{split}
&K\int_0^t|\nabla^\bot\rho(x,s)
\cdot\nabla\Phi(x,s)-\nabla^\bot\rho(y,s)
\cdot\nabla\Phi(y,s)|ds\\&\leq K\int_0^t|\nabla\Phi(x,s)||\nabla^\bot\rho(x,s)-\nabla^\bot\rho(y,s)|ds\\&\quad
+K\int_0^t|\nabla^\bot\rho(y,s)||\nabla\Phi(x,s)-\nabla\Phi(y,s)|ds\\
&\triangleq I+II.\end{split}
\end{equation}
By \eqref{2.11-2},
\begin{equation*}\begin{split}
I&\leq K\|\nabla\Phi\|_{\C(\overline{Q_T})}
\int_0^t\|\nabla\rho(\cdot,s)\|_{\C^\alpha(\bar{\Omega})}ds|x-y|^\alpha\\&
\leq C\Big(M+\|h\|_{W^{2,p}(\Omega)}\Big)
\int_0^t s^{-\frac{1}{2}-\frac{1}{p}-\alpha}Mds|x-y|^\alpha\\
&\leq C(M^2+\|h\|^2_{W^{2,p}})t^{\frac{1}{2}-\frac{1}{p}-\alpha}|x-y|^\alpha,\end{split}\end{equation*}
and
\begin{equation*}
\begin{split}
II&\leq K\int_0^t\|\nabla\rho(\cdot,s)\|_{\C(\bar{\Omega})}
\|\nabla\Phi(\cdot,s)\|_{\C^\alpha(\bar{\Omega})}ds|x-y|^\alpha\\&
\leq C\int_0^t s^{-\frac{1}{2}-\frac{1}{p}-\alpha}M(M+\|h\|_{W^{2,p}})ds|x-y|^\alpha\\
&\leq C t^{\frac{1}{2}-\frac{1}{p}-\alpha}(M^{2}+\|h\|^2_{W^{2,p}}).\end{split}
\end{equation*}
It hence follows from \eqref{new-2.17}, \eqref{2.13}-\eqref{2.21} that
\begin{equation}\label{2.14}
\|\omega\|_{\C([0,T];\C^{\alpha}(\bar{\Omega}))}\leq CT^{\frac{1}{2}-\frac{1}{p}-\alpha}
(M^2+\|h\|^{2}_{W^{2,p}})+
\|\omega_0\|_{{\C^\alpha}(\bar{\Omega})}
(1+e^{\alpha MT}).
\end{equation}
Substituting this inequality into \eqref{2.16}, we obtain
\begin{equation}\label{2.23}
\begin{split}
\|v\|_{\C([0,T];\C^{1,\alpha}(\bar{\Omega}))}\leq CT^{\frac{1}{2}-\frac{1}{p}-\alpha}
(M^2+\|h\|^{2}_{W^{2,p}})+
C\|u(0)\|_{\C^{1,\alpha}(\bar{\Omega})}
(1+e^{\alpha MT}).
\end{split}\end{equation}
Define $x_{1}=\xi_{t_{2}-t_{1}}^{t_{1}}(x)$, then by definition $w(x,t_{1})=w(x_{1},t_{2})$
we also have
\begin{equation*}
\mid x_{1}-x\mid=\left|\int_{-t_{1}}^{-t_{2}}u(\xi_{s}^{t_{1}}(x),s+t_{1})ds\right|\leq \mid t_{1}-t_{2}\mid\cdot M
\end{equation*}
Note that
\begin{equation*}
w(x,t_{1})-w(x,t_{2})=w(x_{1},t_{2})-w(x,t_{2}).
\end{equation*}
 So we can get the   H\"{o}lder norms in time direction  using the estimates in spacial direction as above.
 More precisely,  we have.
\begin{equation}\label{2.23-2}
\begin{split}
\|w\|_{\C^{\alpha}(Q_T)}\leq CT^{\frac{1}{2}-\frac{1}{p}-\alpha}(M^2+\|h\|^{2}_{W^{2,p}})+C\|u(0)\|_{\C^{1,\alpha}(\bar{\Omega})}(1+e^{\alpha MT}).
\end{split}\end{equation}
 Using \eqref{2.16} we have
\begin{equation}\label{u_esti}
\begin{split}
\|v\|_{\C^{\alpha}(Q_T)}\leq CT^{\frac{1}{2}-\frac{1}{p}-\alpha}(M^2+\|h\|^{2}_{W^{2,p}})+C\|u(0)\|_{\C^{1,\alpha}(\bar{\Omega})}(1+e^{\alpha MT}).
\end{split}\end{equation}
In view of \eqref{c_i_esti}, \eqref{2.23} and \eqref{u_esti}, we now choose
\[M=4C(\|u(0)\|_{\C^{1,\alpha}(\bar{\Omega})})+
\sum_{i=1}^N (\|\gamma_i\|_{\C^{2,\alpha}(\bar{\Omega})}
+\|c_i(0)\|_{L^p(\Omega)}),\]
and $T\ll1$ such that
\[ \ CMT^{\delta}<\frac{1}{2}\text{ and } \alpha MT<\ln2,\]
 where $\delta$ is defined as in \eqref{del1}.
Then it is easy to see that $\Lambda$ is a mapping from $B_M$ to $B_M$.

\emph{Step 3. Existence.}
To get the existence, we use the Schauder fixed point theorem.  Pick a number $q_1$ such that $1<q_1<q$. Define  a space
$$Y=\C([0,T]; W^{1,q_1}(\Omega))\times
 \C([0,T]; \C^{1+\frac{\alpha}{2}}(\bar{\Omega})).$$
Then clearly $B_M$  is a compact convex set in $Y$.
It's also easy to check $\Lambda$ is continuous
as a mapping in $Y$.
 So we have a  continuous mapping in $Y$ which is
 invariant on a compact convex set.
 Now the existence follows from Schauder fixed point theorem.

\emph{Step 4. Uniqueness.} Let $(c_i^{(1)},u^{(1)})$ and $(c_i^{(2)},u^{(2)})$ be two solutions to system \eqref{1.1}-\eqref{1.7} satisfying \eqref{regularity}. We denote the differences by $(\bar{c}_i,\bar{u})=(c_i^{(1)}-c_i^{(2)},u^{(1)}-u^{(2)})$. Then $(\bar{c}_i,\bar{u})$ satisfies
\begin{equation*}
\partial_{t}\bar{c}_{i}+u^{(1)}\nabla \bar{c}_{i}+\bar{u}\nabla c_i^{(2)} =D_{i}\text{div}(\nabla \bar{c}_{i}+z_{i}c_{i}^{(1)}\nabla\bar{\Phi}+z_{i}\bar{c}_{i}\nabla\Phi^{(2)}),
\end{equation*}
\begin{equation*}
-\varepsilon\Delta\bar{\Phi}=\sum_{i=1}^{N} z_{i}\bar{c}_{i}=\bar{\rho},
\end{equation*}
\begin{equation*}
\partial_{t}\bar{u}+u^{(1)}\nabla \bar{u}+\bar{u}\cdot\nabla u^{(2)}+\nabla (p^{(1)}-p^{(2)})=-K(\rho^{(1)}\nabla\bar{\Phi}+\bar{\rho}\nabla\Phi^{(2)}),
\end{equation*}
\begin{equation*}
\nabla\cdot \bar{u}=0,
\end{equation*}
with $0$ initial-boundary conditions. Employing the $L^2$ theory for the Poisson equation, we have
\[\|\nabla\bar{\Phi}\|_{L^2(\Omega)}\leq C\sum_{i=1}^N\|\bar{c}_{i}\|_{L^2(\Omega)}.\]
Then the $L^2$ estimate for $(\bar{c}_i,\bar{u})$ gives
\[\begin{split}
&\frac{1}{2}\frac{d}{dt}\int_{\Omega}(|\bar{c}_{i}|^{2}+|\bar{u}|^{2})dx
+D_i\int_{\Omega}|\nabla\bar{c}_{i}|^{2}\\&=-\int_{\Omega}\bar{u}\nabla c_i^{(2)}\bar{c}_{i}-D_iz_i\int_{\Omega}(c_{i}^{(1)}\nabla\bar{\Phi}
+\bar{c}_{i}\nabla\Phi^{(2)})\nabla\bar{c}_{i}\\&\quad-\int_{\Omega}\bar{u}\cdot\nabla u^{(2)}\cdot\bar{u}-K\int_{\Omega}(\rho^{(1)}\nabla\bar{\Phi}
+\bar{\rho}\nabla\Phi^{(2)})\bar{u}\\
&\leq C(t^{-\frac{1}{2}-\frac{1}{p}-\alpha}M
+t^{-\frac{1}{p}}M+\|u^{(2)}\|_{\C([0,T];\C^{1})})
\int_{\Omega}(\sum_{i=1}^N|\bar{c}_{i}|^{2}+|\bar{u}|^{2})dx\\&\quad
+\frac{D_i}{4}\int_{\Omega}|\nabla\bar{c}_{i}|^{2}.
\end{split}\]
Thus, owing to Gronwall's inequality, we obtain $(\bar{c}_i,\bar{u})=(0,0)$, which completes the proof.
\end{proof}

\section{A priori estimates}\label{sec-3}

In this section we derive the a priori estimates that are necessary to prove the global well-posedness. To do this, first we need the maximum principle for strong solutions, whose proof is the same as that of Proposition 2 of \cite{CIL20-far}.
\begin{lemma}\label{maxmum}\cite{CIL20-far}
Let $(c_i,u)$ be a strong solution of system \eqref{1.1}-\eqref{1.7} on the time interval $[0,T]$. Assume that $c_i(0)\geq0$, $i=1,\cdots,N$. Then $c_i(x,t)\geq0$ for a.e. $(x,t)\in Q_T$.
\end{lemma}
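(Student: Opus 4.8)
The plan is to establish positivity by an $L^2$ energy estimate on the \emph{negative part} of each concentration. Fix $i$ and set $c_i^-:=\min(c_i,0)\le 0$. The decisive structural observation is that the prescribed boundary datum is nonnegative, $\gamma_i\ge 0$, whence $c_i^-|_{\partial\Omega}=0$ and $c_i^-(\cdot,t)\in H^1_0(\Omega)$; this is precisely where the sign hypothesis on the inhomogeneous Dirichlet condition enters. I would multiply \eqref{1.1} by $c_i^-$, integrate over $\Omega$, and aim at a differential inequality $\frac{d}{dt}\|c_i^-(t)\|_{L^2(\Omega)}^2\le\beta(t)\|c_i^-(t)\|_{L^2(\Omega)}^2$ with $\beta\in L^1(0,T)$. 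Since $c_i(0)\ge 0$ forces $c_i^-(0)=0$, Gronwall's inequality then gives $c_i^-\equiv 0$, i.e.\ $c_i\ge0$. The regularity of a strong solution legitimizes the chain rule for $c_i^-$ (Stampacchia truncation) and each integration by parts, after the usual regularization if necessary.

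For the computation itself, the time derivative yields $\tfrac12\frac{d}{dt}\|c_i^-\|_{L^2}^2$. The transport term vanishes identically: writing $(u\cdot\nabla c_i)\,c_i^-=u\cdot\nabla\big(\tfrac12(c_i^-)^2\big)$ and integrating by parts, the interior term drops out because $\nabla\cdot u=0$ and the boundary term drops out because $u|_{\partial\Omega}=0$. Integrating the diffusion term by parts once (the boundary contribution vanishing since $c_i^-|_{\partial\Omega}=0$) leaves
\[
\tfrac12\frac{d}{dt}\|c_i^-\|_{L^2}^2+D_i\|\nabla c_i^-\|_{L^2}^2
=-D_iz_i\int_\Omega c_i^-\,\nabla\Phi\cdot\nabla c_i^-\,dx,
\]
where I used $c_i\,\nabla c_i^-=c_i^-\,\nabla c_i^-$. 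Rewriting the right-hand side as $-D_iz_i\int_\Omega\nabla\Phi\cdot\nabla\big(\tfrac12(c_i^-)^2\big)\,dx$, one more integration by parts (again boundary-free) together with the Poisson relation $-\varepsilon\Delta\Phi=\rho$ converts it into $-\tfrac{D_iz_i}{2\varepsilon}\int_\Omega\rho\,(c_i^-)^2\,dx$.

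The sole genuine difficulty is to absorb this zeroth-order term, since $\rho=\sum_j z_jc_j$ carries no sign. Here I would exploit that for a strong solution $\rho\in L^2(Q_T)$ (inherited from $c_i\in W^{2,1}_q(Q_T)$ via the parabolic Sobolev embedding). Combining H\"older's inequality with the two-dimensional Ladyzhenskaya inequality $\|f\|_{L^4(\Omega)}^2\le C\|f\|_{L^2(\Omega)}\|f\|_{H^1(\Omega)}$ gives
\[
\tfrac{D_i|z_i|}{2\varepsilon}\int_\Omega|\rho|\,(c_i^-)^2\,dx
\le C\|\rho\|_{L^2(\Omega)}\|c_i^-\|_{L^2(\Omega)}\|c_i^-\|_{H^1(\Omega)}
\le \tfrac{D_i}{2}\|\nabla c_i^-\|_{L^2(\Omega)}^2+\beta(t)\|c_i^-\|_{L^2(\Omega)}^2,
\]
with $\beta(t)=C\big(\|\rho(t)\|_{L^2(\Omega)}+\|\rho(t)\|_{L^2(\Omega)}^2\big)\in L^1(0,T)$, after Young's inequality. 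Absorbing the gradient term on the left produces exactly the Gronwall inequality above, and the conclusion follows as indicated. The estimate is carried out species by species with constants independent of the particular $i$, so it holds for all $N$ concentrations simultaneously; notably no use is made of either special assumption (i) or (ii) of Theorem \ref{thm-1}. I expect the careful bookkeeping of the boundary terms and the precise interpolation exponent needed to secure $\beta\in L^1(0,T)$ to be the only points requiring attention.
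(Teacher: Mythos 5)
Your proof is correct in substance, but note that the paper itself does not prove this lemma at all: it simply defers to Proposition~2 of \cite{CIL20-far}, remarking that the proof is identical. Your truncation--energy argument is the standard one behind that citation, so what you have written effectively makes the paper self-contained on this point. The structural observations are all right and are exactly where the hypotheses enter: $\gamma_i\geq 0$ gives $c_i^-\in H_0^1(\Omega)$, $\nabla\cdot u=0$ together with $u|_{\partial\Omega}=0$ kills the transport term, the Poisson equation \eqref{1.2} turns the drift term into the zeroth-order term $-\tfrac{D_iz_i}{2\varepsilon}\int_\Omega\rho\,(c_i^-)^2$, and Ladyzhenskaya plus Gronwall close the estimate; you are also right that neither assumption (i) nor (ii) of Theorem \ref{thm-1} is used. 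The one place where your "usual regularization if necessary" is doing real work is the justification of the pairings at the bare regularity $c_i\in W^{2,1}_q(Q_T)$: here $\tfrac1q=\tfrac34+\tfrac1{2p}+\tfrac{\alpha}{2}>\tfrac34$, so $q<\tfrac43$, and the parabolic embedding only gives $c_i\in L^r(Q_T)$ with $\tfrac1r=\tfrac1q-\tfrac12$, whence $\tfrac1q+\tfrac1r>1$; thus $\int\partial_t c_i\,c_i^-$ is not obviously in $L^1(Q_T)$ from these exponents alone. This is repaired by the extra information carried by strong solutions from the local theory (Theorem \ref{local-wellposedness}): $c_i\in X$, so $\|c_i(t)\|_{L^\infty}\leq Ct^{-1/p}$ and $\|c_i(t)\|_{L^p}\leq C$. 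Working on $(\tau,T)$ and letting $\tau\to0^+$ makes every step legitimate, and in fact simplifies your absorption step: one can bound $\int_\Omega|\rho|(c_i^-)^2\leq\|\rho(t)\|_{L^\infty}\|c_i^-(t)\|_{L^2}^2$ with $\|\rho(t)\|_{L^\infty}\leq Ct^{-1/p}\in L^1(0,T)$, dispensing with the Ladyzhenskaya interpolation entirely. Either way the Gronwall conclusion $c_i^-\equiv0$ stands.
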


Then we derive the energy estimate for system \eqref{1.1}-\eqref{1.7}.
\begin{lemma}[$L^2$ estimate]\label{lem-3.2}
Assume that $c_i(0)\in L^2(\Omega)$, $c_i(0)\geq0$, $i=1,\cdots,N$, and that $u(0)\in  L^2(\Omega)$. Suppose that one of the following two conditions holds:

i) $N=2$;

ii) $N\geq3$, $D_i\equiv D$ and $|z_i|\equiv z$, $i=1,\cdots,N$.

Then the solution $(c_{i},u)$ satisfies
\begin{equation}\label{3.0}
\max_{t\in[0,T]}\Big(\|u(\cdot,t)\|_{L^2}^2+\|\nabla\Phi(\cdot,t)\|_{L^2}^2
+\sum_{i=1}^N\|c_i(\cdot,t)\|_{L^2}^2\Big)+\int_0^T\sum_{i=1}^N\|\nabla c_i(\cdot,t)\|_{L^2}^2dt\leq C(T).
\end{equation}
\end{lemma}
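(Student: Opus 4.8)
The plan is to run two coupled $L^2$ energy estimates, exploiting the fact that the Lorentz forcing in \eqref{1.3} and the drift term in \eqref{1.1} together do no net work, so that the kinetic and electrostatic energies form a closed budget that does not require any a priori $L^\infty$ bound. Because the boundary data in \eqref{1.5}--\eqref{1.6} are inhomogeneous, I first homogenize: let $\gamma_i^\ast, h^\ast$ be fixed time-independent (say harmonic) extensions of $\gamma_i, h$, and set $\tilde c_i=c_i-\gamma_i^\ast$, $\tilde\Phi=\Phi-h^\ast$, which vanish on $\partial\Omega$ and may therefore be used as test functions. Throughout I use the positivity $c_i\ge0$ from Lemma \ref{maxmum}, which is essential for the sign of the drift dissipation.

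\emph{First energy (kinetic $+$ electrostatic).} Testing \eqref{1.3} with $u$ and using $\nabla\cdot u=0$, $u|_{\partial\Omega}=0$ kills the pressure and the convective term, leaving $\tfrac12\tfrac{d}{dt}\|u\|_{L^2}^2=-K\int_\Omega\rho\,\nabla\Phi\cdot u$. Testing \eqref{1.1} with $Kz_i\Phi$, summing in $i$, and integrating by parts, the convective contribution becomes $-K\int_\Omega\rho\,u\cdot\nabla\Phi$ (again by $\nabla\cdot u=0$ and $u\cdot n=0$ on $\partial\Omega$), which cancels the Lorentz term exactly; the time derivative produces $\tfrac{K\varepsilon}{2}\tfrac{d}{dt}\|\nabla\Phi\|_{L^2}^2$; and the diffusion produces $-\tfrac{K}{\varepsilon}\int_\Omega(\sum_iD_iz_ic_i)\rho$ together with $-K\sum_iD_iz_i^2\int_\Omega c_i|\nabla\Phi|^2$. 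The latter is $\le0$ by positivity. Under hypothesis (ii) the former collapses, via $\sum_iD_iz_i\nabla c_i=D\nabla\rho$, to the dissipation $\tfrac{KD}{\varepsilon}\|\rho\|_{L^2}^2$; under hypothesis (i) the same quantity is handled using the two-species algebra together with $c_i\ge0$. All boundary integrals created by the integrations by parts involve only $\gamma_i,h$ and traces of $\nabla\Phi$, so by the trace theorem and the elliptic bound $\|\Phi\|_{H^2}\le C(\|\rho\|_{L^2}+\|h\|)$ of \eqref{new-2.10} they are absorbed into the dissipation up to a constant. This should give $\sup_{[0,T]}(\|u\|_{L^2}^2+\|\nabla\Phi\|_{L^2}^2)+\int_0^T\|\rho\|_{L^2}^2\,dt\le C(T)$.

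\emph{Second energy (concentrations).} Testing \eqref{1.1} with $\tilde c_i$ and summing, the convective term again drops, the diffusion yields the dissipation $\sum_iD_i\|\nabla c_i\|_{L^2}^2$, and the drift contributes the cubic term $-\tfrac{1}{2\varepsilon}\sum_iD_iz_i\int_\Omega\rho\,c_i^2$ (plus lower-order terms from $\gamma_i^\ast$ and from convecting the extension). I bound the cubic term by the two-dimensional Ladyzhenskaya inequality $\|c_i\|_{L^4}^2\le C\|c_i\|_{L^2}\|c_i\|_{H^1}$, which after Young's inequality gives $\le\tfrac12\sum_iD_i\|\nabla c_i\|_{L^2}^2+C\|\rho\|_{L^2}^2\sum_i\|c_i\|_{L^2}^2$. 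The crucial point is that the coefficient $\|\rho\|_{L^2}^2$ is already time-integrable by the first energy, so this is a linear rather than quadratic forcing in $\sum_i\|c_i\|_{L^2}^2$; Gronwall's inequality then yields $\sup_{[0,T]}\sum_i\|c_i\|_{L^2}^2\le C(T)$, and integrating back in time gives $\int_0^T\sum_i\|\nabla c_i\|_{L^2}^2\,dt\le C(T)$, with the $\|\nabla\Phi\|_{L^2}^2$ bound in \eqref{3.0} following from \eqref{new-2.10}.

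\emph{Main obstacle.} The heart of the matter is the indefinite coupling produced by the drift $z_ic_i\nabla\Phi$: both the quadratic term $\int(\sum_iD_iz_ic_i)\rho$ in the first energy and the cubic term $\int\rho\,c_i^2$ in the second lack a definite sign in general, and it is exactly here that the structural hypotheses are forced upon us. Under (ii) equal diffusivities collapse $\sum_iD_iz_i\nabla c_i$ into $D\nabla\rho$, converting the quadratic term into the clean dissipation $\tfrac{KD}{\varepsilon}\|\rho\|_{L^2}^2$; under (i) it is the two-species structure, combined with the positivity $c_i\ge0$, that rescues the sign, and I expect this to be the most delicate step since the relevant quadratic form is genuinely indefinite once the diffusivities differ. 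Equally important is the decoupling of the energy budget from the velocity: without the exact cancellation of the Lorentz term against the ionic convection one would have to control $\|\rho\nabla\Phi\|_{L^2}$ directly, which is not available at this level of regularity. Finally, the inhomogeneous boundary conditions mean every integration by parts leaves a boundary remainder, and keeping these remainders subordinate to the dissipation (rather than to the unavailable $L^\infty$ norms of $c_i$) is the recurring technical burden.
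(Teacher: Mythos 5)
Your overall architecture coincides with the paper's: test \eqref{1.3} with $u$ and \eqref{1.1} with $Kz_i$ times the homogenized potential $\Phi_0=\Phi-\Phi_h$ so that the Lorentz work cancels the ionic convection, then run an $L^2$ energy for the homogenized concentrations and close with Gronwall. But both of the places where you defer the real difficulty contain genuine gaps. The first is your claim that in case (i) the indefinite quadratic term $\int_\Omega\big(\sum_iD_iz_ic_i\big)\rho\,dx$ is ``rescued by the two-species algebra together with $c_i\ge0$.'' It is not: writing $x=z_1c_1\ge0$, $y=|z_2|c_2\ge0$, the integrand is
\begin{equation*}
(D_1z_1c_1+D_2z_2c_2)(z_1c_1+z_2c_2)=(D_1x-D_2y)(x-y),
\end{equation*}
which changes sign on the positive cone whenever $D_1\neq D_2$ (take $D_1=1$, $D_2=2$, $x=3/2$, $y=1$). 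So in case (i) the first energy yields no $\rho$-dissipation, and this term cannot even be bounded by the quantities available at that stage. The paper's sign structure lives in a different place: it multiplies the equation \eqref{3.34} for $\tilde c_i$ by $\frac{|z_i|\tilde c_i}{D_i}$ --- the $1/D_i$ weight makes the coefficient of the drift term independent of the diffusivities --- so the drift produces the cubic quantity $-\frac{1}{\varepsilon}\int_\Omega\sum_i|z_i|z_i\tilde c_i^2\,\rho\,dx$, and then uses the factorization $z_1^2\tilde c_1^2-z_2^2\tilde c_2^2=(z_1\tilde c_1+z_2\tilde c_2)(z_1\tilde c_1-z_2\tilde c_2)$ together with the positivity-based inequality $z_1c_1-z_2c_2\ge|\rho|$ to show this term is $\le-\frac{1}{\varepsilon}\int_\Omega|\rho|^3+C\sum_i\int_\Omega\tilde c_i^2+C$; in case (ii) the analogous device is the pair $E=\sum_i\tilde c_i$, $F=\sum_iz_i\tilde c_i$ tested against $z^2E$ and $F$. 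You have no substitute for this step.

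The second gap is that your bootstrap is circular. Even in case (ii), the first energy does not close by itself: splitting $\nabla\Phi=\nabla\Phi_0+\nabla\Phi_h$ produces the \emph{volumetric} error term $K\sum_iD_iz_i^2\int_\Omega c_i\nabla\Phi_h\cdot\nabla\Phi_0\,dx$, which after absorbing half into the good term $\int_\Omega c_i|\nabla\Phi_0|^2dx$ leaves $C\int_\Omega c_i\,dx$; this is not a boundary trace, and under Dirichlet conditions mass is not conserved, so it is not controlled by $\|u\|_{L^2}$, $\|\nabla\Phi_0\|_{L^2}$ and $\|\rho\|_{L^2}$ alone (in case (i) the term $\int_\Omega z_i\tilde c_i\rho\,dx$ causes the same coupling). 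Hence $\int_0^T\|\rho\|_{L^2}^2dt$ is not available before the concentration energy, the two estimates must be run jointly, and then your Ladyzhenskaya--Young bound for the cubic term, $C\|\rho\|_{L^2}^2\sum_i\|\tilde c_i\|_{L^2}^2$, is of dissipation-times-energy type: Gronwall would require exactly the time integrability of $\|\rho\|_{L^2}^2$ that is being proved. The paper never meets this obstruction because its cubic term carries the favorable sign $-\frac{1}{\varepsilon}\int_\Omega|\rho|^3dx$, so the combined differential inequality is \emph{linear} in the energies and $\int_0^T\int_\Omega|\rho|^3$ emerges as part of the dissipation (it is then also what allows the $\nabla\tilde c_i$ dissipation to be recovered in case (ii)), rather than being needed as input.
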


\begin{proof}
Multiplying \eqref{1.3} by $u$ yields
\begin{equation}\label{3.1}
\frac{1}{2}\frac{d}{dt}\int_{\Omega}|u|^{2}dx+\int_{\Omega}(u\cdot\nabla u)\cdot udx+\int_{\Omega}\nabla p\cdot udx=-\int_{\Omega}K\rho u\cdot\nabla \Phi dx.
\end{equation}Performing an integration by parts, by \eqref{1.4} and \eqref{1.7}, we have
\begin{equation*}
\int_{\Omega}(u\cdot\nabla u)\cdot udx=\frac{1}{2}\int_{\Omega}u\cdot \nabla(|u|^{2})dx=-\frac{1}{2}\int_{\Omega}|u|^{2}\text{div} udx=0,\end{equation*}and
\begin{equation*}
\int_{\Omega}\nabla p\cdot udx=-\int_{\Omega}p ~\text{div} udx=0.
\end{equation*}
By the principle of superposition, we decompose $\Phi$  as $\Phi=\Phi_{0}+\Phi_h$, where
\begin{equation}\label{3.2}
\left\{\begin{array}{ll}
-\Delta\Phi_{0}=\displaystyle\frac{\rho}{\varepsilon}, &x\in\Omega,\\
\Phi_{0}=0,&x\in\partial \Omega,\\
\end{array}\right.
\end{equation}
and
\begin{equation}\label{3.3}
\left\{\begin{array}{ll}
-\Delta\Phi_h=0, &x\in\Omega,\\
\Phi_h =h(x),&x\in\partial \Omega.\\
\end{array}\right.
\end{equation}
It then follows from \eqref{3.1} that
\begin{equation}\label{3.4}
\frac{1}{2}\frac{d}{dt}\int_{\Omega}|u|^{2}dx=-\int_{\Omega}K\rho u\nabla\Phi_{0}dx-\int_{\Omega}K\rho u\nabla\Phi_h dx.
\end{equation}

Multiplying \eqref{1.1} by $z_{i}\Phi_{0}$ and summing in $i$, we get
\begin{equation}\label{3.5}
\begin{split}
&\sum_{i=1}^N\int_{\Omega}\partial_{t}c_{i} z_{i}\Phi_{0}dx+\sum_{i=1}^ND_{i}\int_{\Omega}z_{i}^{2}c_{i}|\nabla\Phi_{0}|^2dx\\
=&-\sum_{i=1}^N\int_{\Omega}u\cdot\nabla c_{i} z_{i}\Phi_{0}dx-\sum_{i=1}^ND_{i}\int_{\Omega}(z_{i}\nabla c_{i}\nabla \Phi_{0}+z_{i}^{2}c_{i}\nabla\Phi_h\nabla\Phi_{0})dx.
\end{split}
\end{equation}
By \eqref{1.2}, we have
\begin{equation}\label{3.9}
\sum_{i=1}^N\int_{\Omega}\partial_{t}c_{i}z_{i}\Phi_{0}dx
=-\varepsilon\int_\om\Delta\partial_t\Phi\Phi_{0}dx
=-\varepsilon\int_\om\Delta\partial_t\Phi_{0}\Phi_{0}dx
=\frac{\varepsilon}{2}\frac{d}{dt}\int_\om\mid\nabla\Phi_{0}\mid^{2}dx.\end{equation}
A simple calculation using $\text{div}u=0$ yields
\begin{equation}\label{new-3.10}
-\sum_{i=1}^N\int_\om u\cdot\nabla c_{i} z_{i}\Phi_{0}dx=-\int_\om u\nabla\rho\Phi_{0}dx=-\int_\om\nabla\cdot(\rho u)\Phi_{0}dx.
\end{equation}
By decomposing $c_i$ as $c_{i}=\tilde{c}_{i}+\gamma_{i}$, so that $\tilde{c}_{i}|_{\partial\Omega}=0$,
it follows from \eqref{3.5}-\eqref{new-3.10} that
\begin{equation}\label{3.8}
\begin{aligned}
&\frac{\varepsilon}{2}\frac{d}{dt}\int_\om\mid\nabla\Phi_0\mid^{2}dx
+\sum_{i=1}^ND_{i}\int_\om z_{i}^{2}c_{i}|\nabla\Phi_0|^2dx\\
=&-\int_\om\nabla\cdot(\rho u)\Phi_{0}dx-\sum_{i=1}^ND_{i}\int_\om (z_{i}^{2} c_{i}\nabla\Phi_h\nabla\Phi_0 +z_{i}\nabla\tilde{c}_{i}\nabla\Phi_{0}+ z_{i}\nabla\gamma_{i}\nabla\Phi_{0})dx.
\end{aligned}
\end{equation}
Combining $\eqref{3.8}\times K$ with \eqref{3.4}, noting
\[\int_\om(\nabla\cdot(\rho u)\Phi_{0}+\rho u\nabla\Phi_{0})dx=0,\]
we obtain
\begin{equation}\label{3.12}
\begin{split}\frac{1}{2}&\frac{d}{dt}\int_\om\Big(|u|^{2}
+K\varepsilon\mid\nabla\Phi_{0}\mid^{2}
\Big)dx+\sum_{i=1}^N KD_{i}z_{i}^{2}\int_\om c_{i}|\nabla\Phi_0|^2dx\\&=-\int_\om K\rho u\nabla\Phi_h dx-K\sum_{i=1}^ND_{i}\int_\om (z_{i}^{2}c_{i}\nabla\Phi_h\nabla\Phi_{0}+z_{i}\nabla \tilde{c}_{i}\nabla\Phi_{0}+z_{i}\nabla \gamma_{i}\nabla\Phi_{0})dx.
\end{split}\end{equation}
The RHS of \eqref{3.12} can be estimated as follows. By Schauder estimate for \eqref{3.3}, we have
\[\|\nabla\Phi_h\|_{C(\bar{\Omega})}\leq C\|h\|_{C^{2,\alpha}(\bar{\Omega})}\leq C. \]
It then follows that
\[\begin{split}
-\int_\om K\rho u\nabla\Phi_h dx&=-K\int_\om\sum_{i=1}^N z_ic_iu\nabla\Phi_hdx\\&
=-K\int_\om\sum_{i=1}^Nz_i\tilde{c}_{i}u\nabla\Phi_hdx
-K\int_\om\sum_{i=1}^Nz_i\gamma_{i}u\nabla\Phi_hdx\\
&\leq C\|\nabla\Phi_h\|_{L^\infty}
\int_\om(|u|^{2}+\tilde{c}_{i}^{2})dx+C\int_\om|u|^{2}dx+C\\
&\leq C\int_\om(|u|^{2}+\tilde{c}_{i}^{2})dx+C,\end{split}\]
and
\[\begin{split}
\Big|\int_\om z_{i}^{2}c_{i}\nabla\Phi_h\nabla\Phi_{0}dx\Big|&\leq\frac{1}{2}\int_\om z_{i}^{2}c_{i}|\nabla\Phi_{0}|^2dx+\frac{1}{2}\int_\om z_{i}^{2}c_{i}|\nabla\Phi_h|^2dx\\&\leq\frac{1}{2}\int_\om z_{i}^{2}c_{i}|\nabla\Phi_{0}|^2dx
+C\int_\om \tilde{c}_{i}^2dx+C.\end{split}\]
Similarly,
\[\begin{split}
-KD_i\int_\om (z_{i}\nabla\tilde{c}_{i}\nabla\Phi_{0}+ z_{i}\nabla\gamma_{i}\nabla\Phi_{0})dx
&=\frac{KD_i}{\varepsilon}\int_\om z_{i}\tilde{c}_{i}\rho dx- KD_i\int_\om z_{i}\nabla\gamma_{i}\nabla\Phi_{0}dx\\
&\leq C\sum_{i=1}^N\int_\om |\tilde{c}_{i}|^{2}dx+C\int_\om|\nabla\Phi_{0}|^2dx
+C.
\end{split}\]
Substituting these estimates into \eqref{3.12} leads to
\begin{equation}\label{new-3.13}
\begin{aligned}
&\frac{d}{dt}\int_\om\Big(|u|^{2}
+K\varepsilon\mid\nabla\Phi_{0}\mid^{2}
\Big)dx+\sum_{i=1}^N KD_{i}z_{i}^{2}\int_\om c_{i}|\nabla\Phi_0|^2dx\\
&\leq C\int_\om\left(|u|^{2}+\mid\nabla\Phi_{0}\mid^{2}+\tilde{c}_{i}^2
\right)dx+C.
\end{aligned}
\end{equation}

We next estimate $\tilde{c}_i$. Notice that $\tilde{c}_{i}$ satisfies
\begin{equation}\label{3.34}
\begin{aligned}
\partial_{t}\tilde{c}_{i}-D_{i}\Delta\tilde{c}_{i}
+u\nabla\tilde{c}_{i}-D_{i}\text{div}(z_{i}\tilde{c}_{i}\nabla\Phi)&
=D_{i}\Delta\gamma_{i}+
D_{i}z_{i}\nabla\gamma_{i}\nabla\Phi+D_{i}z_{i}\gamma_{i}\Delta\Phi-u\nabla\gamma_{i}\\
&\triangleq f_i.
\end{aligned}
\end{equation}
Multiplying \eqref{3.34} by $\frac{|z_i|\tilde{c}_{i}}{D_i}$,
and using $\text{div}u=0$, we get after summing in $i$
\begin{equation*}%\label{3.10}
\begin{aligned}
&\frac{1}{2}\frac{d}{dt}\int_\om\sum_{i=1}^N\frac{|z_i|}{D_i}\tilde{c}_{i}^2dx+\sum_{i=1}^N\int_\om |z_i| |\nabla\tilde{c}_{i}|^2dx\\&
=-\int_\om \sum_{i=1}^N|z_i|z_{i}\tilde{c}_{i}\nabla\Phi\nabla\tilde{c}_{i}dx+\int_\om \sum_{i=1}^N|z_i|\Delta\gamma_{i}\cdot\tilde{c}_{i}dx\\
&\quad-\int_\om \sum_{i=1}^N\Big(|z_i|z_{i}\gamma_{i}\nabla\Phi\nabla\tilde{c}_{i}+ \frac{|z_i|}{D_i} u\cdot\nabla\gamma_{i}\cdot\tilde{c}_{i}\Big)dx.
\end{aligned}
\end{equation*}
By Young's inequality, the last three terms on the RHS can be estimated as
\begin{equation*}%\label{new-3.15}
\begin{aligned}
&\Big|\int_\om |z_i|\Delta\gamma_{i}\cdot\tilde{c}_{i}dx\Big|+\Big|\int_\om |z_i|z_{i}\gamma_{i}\nabla\Phi\nabla\tilde{c}_{i}dx\Big|+ \Big|\int_\om\frac{|z_i|}{D_i} u\cdot\nabla\gamma_{i}\cdot\tilde{c}_{i}dx\Big|\\
&\leq\int_\om|z_i||\tilde{c}_{i}|^2dx+
\frac{1}{2}\int_\om|z_i||\nabla\tilde{c}_{i}|^2dx+C\int_\om(|\Delta\gamma_{i}|^2+\gamma_{i}^2|\nabla\Phi|^2+|\nabla\gamma_{i}|^2|u|^2)dx\\
&\leq\frac{1}{2}\int_\om|z_i||\nabla\tilde{c}_{i}|^2dx
+C\int_\om(|z_i||\tilde{c}_{i}|^2+|\nabla\Phi|^2+|u|^2)dx+C.
\end{aligned}
\end{equation*}
It then follows that
\begin{equation}\label{3.10}
\begin{aligned}
&\frac{d}{dt}\int_\om\sum_{i=1}^N\frac{|z_i|}{D_i}\tilde{c}_{i}^2dx+\sum_{i=1}^N\int_\om |z_i| |\nabla\tilde{c}_{i}|^2dx\\&
\leq-2\int_\om \sum_{i=1}^N|z_i|z_{i}\tilde{c}_{i}\nabla\Phi\nabla\tilde{c}_{i}dx
+C\int_\om(\sum_{i=1}^N|z_i||\tilde{c}_{i}|^2+|\nabla\Phi|^2+|u|^2)dx+C.
\end{aligned}
\end{equation}
The first term on the RHS of \eqref{3.10} satisfies
\begin{equation}\label{3.13}
\begin{aligned}
-2\int_\om \sum_{i=1}^N|z_i|z_{i}\tilde{c}_{i}\nabla\Phi\nabla\tilde{c}_{i}dx=\int_\om \sum_{i=1}^N |z_i|z_{i}\tilde{c}_{i}^2\Delta\Phi dx=-\frac{1}{\varepsilon}\int_\om \sum_{i=1}^N |z_i|z_{i}\tilde{c}_{i}^2\cdot\sum_{i=1}^Nz_ic_idx,
\end{aligned}
\end{equation}
where we have performed an integration by parts in the first equality, and used \eqref{1.2} in the second equality. In the case  $N=2$, we assume that $z_1>0$ and $z_2<0$ without loss of generality. Then it is easy to see that
\[\begin{split}
\sum_{i=1}^2|z_i|z_{i}\tilde{c}_{i}^2\cdot\sum_{i=1}^2z_ic_i
&=[z_1^2(c_1-\gamma_1)^2-z_2^2(c_2-\gamma_2)^2]\rho\\&
=[z_1(c_1-\gamma_1)+z_2(c_2-\gamma_2)][z_1(c_1-\gamma_1)-z_2(c_2-\gamma_2)]\rho\\&
=(\rho-z_1\gamma_1-z_2\gamma_2)(z_1c_1-z_2c_2-z_1\gamma_1+z_2\gamma_2)\rho\\&
=\rho^2(z_1c_1-z_2c_2)+\rho^2(-z_1\gamma_1+z_2\gamma_2)\\&\quad
-(z_1\gamma_1+z_2\gamma_2)(z_1c_1-z_2c_2)\rho+(z_1^2\gamma_1^2-z_2^2\gamma_2^2)\rho.
\end{split}\]
Noting $z_1c_1-z_2c_2\geq|\rho|$, it holds
\[\begin{split}
\sum_{i=1}^2|z_i|z_{i}\tilde{c}_{i}^2\cdot\sum_{i=1}^2z_ic_i
\geq|\rho|^3-C\rho^2-C|\rho|.\end{split}\]
Hence in the case  $N=2$, by \eqref{3.13} we have
\begin{equation*}%\label{3.14}
\begin{aligned}
-2\sum_{i=1}^2\int_\om |z_i|z_{i}\tilde{c}_{i}\nabla\Phi\nabla\tilde{c}_{i}dx
\leq -\frac{1}{\varepsilon}\int_\om |\rho|^3dx+C\sum_{i=1}^2\int_\om \tilde{c}_{i}^2dx+C.\end{aligned}
\end{equation*}
Substituting this inequality into \eqref{3.10} yields
\begin{equation}\label{3.11}
\begin{split}
&\frac{d}{dt}\int_\om\sum_{i=1}^2\frac{|z_i|}{D_i}\tilde{c}_{i}^2dx+\sum_{i=1}^2\int_\om |z_i| |\nabla\tilde{c}_{i}|^2dx+\frac{1}{\varepsilon}\int_\om |\rho|^3dx
\\&\leq C\int_\om(\sum_{i=1}^2|\tilde{c}_{i}|^2+|\nabla\Phi|^2+|u|^2)dx+C.
\end{split}
\end{equation}
One can thus get \eqref{3.0} in the case $N=2$  by combining \eqref{3.11} with \eqref{new-3.13} and using the Gronwall's inequality.

In the case $N\geq3$, as in \cite{L}, we set $E:=\sum_{i=1}^N\tilde{c}_{i}$ and $F:=\sum_{i=1}^Nz_i\tilde{c}_{i}$. In view of \eqref{3.34}, since $D_i\equiv D$ and $|z_i|\equiv z$ for $i=1,\cdots,N$, $E$ and $F$ satisfy
\begin{equation}\label{3.18}
\partial_tE-D\Delta E+u\nabla E-D\text{div}(F\nabla\Phi)=\sum_{i=1}^Nf_i,
\end{equation}
\begin{equation}\label{new-3.19}
\partial_tF-D\Delta F+u\nabla F-D\text{div}(z^2E\nabla\Phi)=\sum_{i=1}^Nzf_i.
\end{equation}
Combining $\eqref{3.18}\times z^2E$ with $\eqref{new-3.19}\times F$, and using $\text{div}u=0$, we have
\begin{equation*}\begin{split}
\frac{1}{2}\frac{d}{dt}\int_\Omega(z^2E^2+F^2)+D\int_\Omega(z^2|\nabla E|^2+|\nabla F|^2)=-\frac{D}{\varepsilon}\int_\om z^2EF\rho +\sum_{i=1}^N\int_\om f_i(z^2E+zF).
\end{split}\end{equation*}
Observing that
\[\begin{split}
z^2EF\rho&=\sum_{i=1}^N(z^2c_i-z^2\gamma_i)(\rho-\sum_{i=1}^Nz\gamma_i)\rho\\
&=\sum_{i=1}^Nz^2c_i\rho^2-\sum_{i=1}^Nz^2c_i\sum_{i=1}^Nz\gamma_i\rho
-\sum_{i=1}^Nz^2\gamma_i\rho+\sum_{i=1}^Nz^2\gamma_i\sum_{i=1}^Nz\gamma_i\rho\\
&\geq|z||\rho|^3-2\rho^2\sum_{i=1}^Nz^2|\gamma_i|
-|\rho|\sum_{i=1}^Nz^2|\gamma_i|\sum_{i=1}^Nz|\gamma_i|,
\end{split}\]
and
\begin{equation}\label{fi}
\int_\Omega f_i^2\leq C\int_\Omega\left(|u|^2+\rho^2+|\nabla\Phi_0|^2\right)+C,\end{equation}
we obtain
\begin{equation*}\begin{split}
&\frac{d}{dt}\int_\Omega(z^2E^2+F^2)+D\int_\Omega(z^2|\nabla E|^2+|\nabla F|^2)+\int_\Omega|\rho|^3\\&\leq C\int_\Omega(z^2E^2+F^2+|u|^{2}+\sum_{i=1}^N \tilde{c}_{i}^{2}+|\nabla\Phi_0|^2)+C.
\end{split}\end{equation*}
Combining this inequality with \eqref{new-3.13}, noting $$0\leq c_i\leq\sum_{i=1}^Nc_i=E+\sum_{i=1}^N\gamma_i,$$ by Gronwall's inequality, we get
\begin{equation}\label{new-3.20}
\max_{t\in[0,T]}\int_\om(|u|^{2}+\mid\nabla\Phi_{0}\mid^{2}
+\sum_{i=1}^N\tilde{c}_{i}^{2})dx+\int_0^T\int_\Omega|\rho|^3\leq C(T).\end{equation}
Owing to \eqref{3.13},
\begin{equation*}
\begin{aligned}
\left|\int_\om \sum_{i=1}^N|z_i|z_{i}\tilde{c}_{i}\nabla\Phi\nabla\tilde{c}_{i}dx\right|&\leq C\sum_{i=1}^N\int_\om \tilde{c}_{i}^2|\rho|\\
&\leq C\sum_{i=1}^N\|\tilde{c}_{i}\|_{L^2}\|\tilde{c}_{i}\|_{L^6}\|\rho\|_{L^3}\\
&\leq C\sum_{i=1}^N\|\nabla\tilde{c}_{i}\|_{L^2}\|\rho\|_{L^3}\\
&\leq \frac{1}{2}\sum_{i=1}^N\int_\om |z_i| |\nabla\tilde{c}_{i}|^2dx+\int_\Omega|\rho|^3+C.
\end{aligned}
\end{equation*}
Plugging this inequality into \eqref{3.10}, by  \eqref{new-3.20}, we have
\begin{equation*}
\sum_{i=1}^N\int_0^T\int_\om |\nabla\tilde{c}_{i}|^2\leq C(T).
\end{equation*}
The desired estimate \eqref{3.0} follows from this inequality and \eqref{new-3.20}.
\end{proof}

\begin{lemma}[$L^p$ estimate for $c_i$]\label{Lp}
Suppose that the same assumptions of Lemma \ref{lem-3.2} hold. Assume that $c_i(0)\in L^p(\Omega)$, $p>2$, then the solution satisfies
\begin{equation}\label{3.15}
\max_{t\in[0,T]}\|c_i(\cdot,t)\|_{L^p}\leq C(T),
\end{equation}
\begin{equation}\label{3.15-2}
\max_{t\in[0,T]}\|\nabla\Phi(\cdot,t)\|_{L^\infty}\leq C(T).
\end{equation}

\end{lemma}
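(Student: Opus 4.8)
The plan is to run an $L^p$ energy estimate on the shifted concentration $\tilde c_i=c_i-\gamma_i$, which by \eqref{3.34} solves a parabolic equation with zero Dirichlet data and forcing $f_i$. I would test \eqref{3.34} with $|\tilde c_i|^{p-2}\tilde c_i$ and integrate over $\Omega$. The transport term drops out since $\mathrm{div}\,u=0$ and $\tilde c_i|_{\partial\Omega}=0$; the diffusion gives the good dissipation $D_i(p-1)\int_\Omega|\tilde c_i|^{p-2}|\nabla\tilde c_i|^2$; and the drift term, after integration by parts and using $\Delta\Phi=-\rho/\varepsilon$ from \eqref{1.2}, becomes $\frac{D_iz_i(p-1)}{p\varepsilon}\int_\Omega|\tilde c_i|^p\rho$. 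This yields
\[
\frac{1}{p}\frac{d}{dt}\|\tilde c_i\|_{L^p}^p+D_i(p-1)\int_\Omega|\tilde c_i|^{p-2}|\nabla\tilde c_i|^2\,dx\le C\int_\Omega|\tilde c_i|^p|\rho|\,dx+\int_\Omega|f_i||\tilde c_i|^{p-1}\,dx.
\]
Unlike the $L^2$ step, no sign or structure of $z_i$ is needed here: I simply bound the drift by its absolute value, spending the structural hypotheses (i)/(ii) only through the conclusion of Lemma \ref{lem-3.2}.

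The key is to absorb the right-hand side into the dissipation using two-dimensional interpolation. Writing $w=|\tilde c_i|^{p/2}\in H_0^1$, the dissipation controls $\|\nabla w\|_{L^2}^2$. Since $|\rho|\le C\sum_j|\tilde c_j|+C$, the drift reduces to estimating $\int_\Omega w^2|\tilde c_j|\,dx$, which by H\"older and the Ladyzhenskaya inequality $\|w\|_{L^4}^2\le C\|w\|_{L^2}\|\nabla w\|_{L^2}$ obeys
\[
\int_\Omega w^2|\tilde c_j|\,dx\le\|w\|_{L^4}^2\|\tilde c_j\|_{L^2}\le C\|w\|_{L^2}\|\nabla w\|_{L^2}\|\tilde c_j\|_{L^2}\le\eta\|\nabla w\|_{L^2}^2+C_\eta\|\tilde c_j\|_{L^2}^2\,\|\tilde c_i\|_{L^p}^p.
\]
Because $\max_{[0,T]}\|\tilde c_j\|_{L^2}\le C(T)$ by \eqref{3.0}, the last factor is a bounded coefficient times $\|\tilde c_i\|_{L^p}^p$, and $\eta\|\nabla w\|_{L^2}^2$ is absorbed on the left. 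The forcing is handled the same way: for $f_i=D_i\Delta\gamma_i+D_iz_i\nabla\gamma_i\nabla\Phi+D_iz_i\gamma_i\Delta\Phi-u\nabla\gamma_i$, each contribution $\int_\Omega|f_i|\,w^{2-2/p}$ is split and estimated by H\"older together with the Gagliardo--Nirenberg inequality $\|w\|_{L^s}\le C\|w\|_{L^2}^{2/s}\|w\|_{H^1}^{1-2/s}$ with $s=4-4/p\in(2,4)$; using $\Delta\Phi=-\rho/\varepsilon$, $|\rho|\lesssim\sum_j|\tilde c_j|+C$, and the uniform bounds on $\|u\|_{L^2}$ and $\|\nabla\Phi\|_{L^2}$ from \eqref{3.0}, a Young inequality bounds every such term by $\eta\|\nabla w\|_{L^2}^2+C(T)(\|\tilde c_i\|_{L^p}^p+1)$.

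Collecting these gives $\frac{d}{dt}\|\tilde c_i\|_{L^p}^p\le C(T)(\|\tilde c_i\|_{L^p}^p+1)$ with $C(T)$ independent of $t$, so Gronwall's inequality and the finiteness of $\|\tilde c_i(0)\|_{L^p}\le\|c_i(0)\|_{L^p}+C$ yield $\max_{[0,T]}\|\tilde c_i\|_{L^p}\le C(T)$; since $\gamma_i$ is bounded this is exactly \eqref{3.15}. Then \eqref{3.15-2} is immediate: with $\rho\in L^p$ uniformly, the $L^p$ elliptic theory for \eqref{1.2} gives $\|\Phi\|_{W^{2,p}(\Omega)}\le C(\|\rho\|_{L^p}+\|h\|_{W^{2,p}})\le C(T)$, and the two-dimensional embedding $W^{2,p}\hookrightarrow\C^{1,1-2/p}(\bar{\Omega})$ for $p>2$ controls $\|\nabla\Phi\|_{L^\infty}$, exactly as in \eqref{new-2.10}--\eqref{2.11-2}.

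I expect the main obstacle to be the drift term $\int_\Omega|\tilde c_i|^p\rho$, which is cubic in the concentrations and a priori out of reach of the $L^2$ bounds alone. The resolution is genuinely two-dimensional: Ladyzhenskaya's inequality lets the quadratic-in-$w$ excess be absorbed into the diffusion at the cost of only the uniform $L^2$ norms already furnished by Lemma \ref{lem-3.2}. Since \eqref{3.15} is an a priori estimate, I would also note that testing with the non-smooth weight $|\tilde c_i|^{p-2}\tilde c_i$ is justified by a routine mollification of $s\mapsto|s|^{p-2}s$ (or a truncation argument), which I would only indicate rather than carry out.
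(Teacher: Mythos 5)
Your proposal is correct, and its overall architecture is the same as the paper's: test \eqref{3.34} with a constant multiple of $|\tilde c_i|^{p-2}\tilde c_i$, absorb the cubic drift contribution into the dissipation $\int_\Omega|\tilde c_i|^{p-2}|\nabla\tilde c_i|^2$ by a two-dimensional interpolation argument whose only input is Lemma \ref{lem-3.2}, close with Gronwall, and then deduce \eqref{3.15-2} from $L^p$ elliptic theory and the embedding $W^{2,p}\hookrightarrow\C^{1,1-2/p}(\bar{\Omega})$, exactly as in \eqref{3.27}. Where you genuinely deviate is the treatment of the drift: you integrate by parts twice so that $\nabla\Phi$ disappears in favor of $\Delta\Phi=-\rho/\varepsilon$, and then control $\int_\Omega|\tilde c_i|^p|\rho|$ by Ladyzhenskaya's inequality together with the uniform $L^2$ bounds of \eqref{3.0}; this mirrors the manipulation the paper itself performs at the $L^2$ level in \eqref{3.13}, but the paper's $L^p$ proof does something different, keeping $\nabla\Phi$ intact, invoking the elliptic estimate $\|\nabla\Phi\|_{L^6}\le C\|\Phi\|_{H^2}\le C(T)$, and running an $L^2\times L^3\times L^6$ H\"older--interpolation--Sobolev chain in \eqref{new-3.22}; moreover, for the $\gamma_i\nabla\Phi$ contribution the paper needs the sharper bound \eqref{3.27}, $\|\nabla\Phi\|_{L^\infty}\le C+C\sum_j\|\tilde c_j\|_{L^p}$, whereas your version keeps every lower-order term at the $L^2$ level. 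The two routes are of equal strength here: yours is slightly more elementary and self-contained, since elliptic regularity for $\Phi$ enters only at the very end (for \eqref{3.15-2}), while the paper's has the mild advantage that the intermediate estimate \eqref{3.27} it develops is precisely what converts \eqref{3.15} into \eqref{3.15-2}, so nothing is wasted. Both arguments, as you correctly note, use hypotheses (i)/(ii) only through the conclusion of Lemma \ref{lem-3.2}, and your closing remarks (boundedness of $\|f_i\|_{L^2}$ via \eqref{fi}, and justification of the non-smooth test function by mollification) are consistent with what the paper implicitly assumes.
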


\begin{proof}
Multiplying \eqref{3.34} by $\frac{z_i^2}{D_i}|\tilde{c}_{i}|^{p-2}\tilde{c}_{i}$, and using $\text{div}u=0$, we have after summing in $i$
\begin{equation}\label{3.16}
\begin{aligned}
&\frac{d}{dt}\sum_{i=1}^N\int_\om \frac{z_i^2}{3D_i}|\tilde{c}_{i}|^pdx+\sum_{i=1}^N\int_\om z_i^2 |\tilde{c}_{i}|^{p-2}|\nabla\tilde{c}_{i}|^2dx\\&
=-\int_\om \sum_{i=1}^Nz_{i}^3\tilde{c}_{i}\nabla\Phi\nabla(|\tilde{c}_{i}|^{p-2}\tilde{c}_{i})dx
+\int_\om \sum_{i=1}^Nz_i^2\Delta\gamma_{i}\cdot|\tilde{c}_{i}|^{p-2}\tilde{c}_{i}dx\\
&\quad-\int_\om\sum_{i=1}^N\Big( z_i^3 \gamma_{i}\nabla\Phi\nabla(|\tilde{c}_{i}|^{p-2}\tilde{c}_{i})+ \frac{z_i^2}{D_i} u\cdot\nabla\gamma_{i}\cdot|\tilde{c}_{i}|^{p-2}\tilde{c}_{i}\Big)dx.
\end{aligned}
\end{equation}
By $L^2$ estimate for \eqref{1.2}, Sobolev embedding theorem and Lemma \ref{lem-3.2}, we have
\[\|\nabla\Phi(\cdot,t)\|_{L^6}\leq C\|\Phi(\cdot,t)\|_{H^{2}}\leq C+C\|\rho(\cdot,t)\|_{L^2}\leq C(T).\]
Thus, the first term on the RHS of \eqref{3.16} satisfies
\begin{equation}\label{new-3.22}
\begin{aligned}
\left|\int_\om \sum_{i=1}^N z_{i}^3\tilde{c}_{i}\nabla\Phi\nabla(|\tilde{c}_{i}|^{p-2}\tilde{c}_{i})dx\right|
&\leq C\sum_{i=1}^N\int_\om z_i^2|\tilde{c}_{i}|^{\frac{p}{2}}||\nabla\tilde{c}_{i}|^{\frac{p}{2}}|\nabla\Phi|\\
&\leq C\sum_{i=1}^Nz_i^2\|\nabla\tilde{c}_i^{\frac{p}{2}}(\cdot,t)\|_{L^2}
\|\tilde{c}_i^{\frac{p}{2}}(\cdot,t)\|_{L^3}\|\nabla\Phi(\cdot,t)\|_{L^6}\\
&\leq C\sum_{i=1}^Nz_i^2\|\nabla\tilde{c}_i^{\frac{p}{2}}(\cdot,t)\|_{L^2}
\|\tilde{c}_i^{\frac{p}{2}}(\cdot,t)\|_{L^2}^{\frac{1}{2}}
\|\tilde{c}_i^{\frac{p}{2}}(\cdot,t)\|_{L^6}^{\frac{1}{2}}\\
&\leq C\sum_{i=1}^Nz_i^2\|\nabla\tilde{c}_i^{\frac{p}{2}}(\cdot,t)\|_{L^2}^{\frac{3}{2}}
\|\tilde{c}_i^{\frac{p}{2}}(\cdot,t)\|_{L^2}^{\frac{1}{2}}\\
&\leq \frac{1}{4}\sum_{i=1}^N\int_\om z_i^2 |\tilde{c}_{i}|^{p-2}|\nabla\tilde{c}_{i}|^2dx+C\sum_{i=1}^N\int_\om|\tilde{c}_{i}|^pdx,
\end{aligned}
\end{equation}
where we have used H\"{o}lder's inequality in the second inequality, interpolation in the third inequality and Sobolev embedding in the fourth inequality.

The third term on the RHS of \eqref{3.16} can be estimated as follows. By $L^p$ estimate for \eqref{1.2},
\[\|\Phi(\cdot,t)\|_{W^{2,p}}\leq C+C\|\rho(\cdot,t)\|_{L^p}\leq C+ C\sum_{i=1}^N\|c_i(\cdot,t)\|_{L^p}\leq C+ C\sum_{i=1}^N\|\tilde{c}_i(\cdot,t)\|_{L^p}.\]
Noting $p>2$, it then follows from the Sobolev embedding theorem that,
\begin{equation}\label{3.27}
\|\nabla\Phi(\cdot,t)\|_{L^\infty}\leq C+ C\sum_{i=1}^N\|\tilde{c}_i(\cdot,t)\|_{L^p},
\end{equation}
which gives
\begin{equation}\label{3.22}
\begin{split}
\Big|\int_\om z_i^3 \gamma_{i}\nabla\Phi\nabla(|\tilde{c}_{i}|^{p-2}\tilde{c}_{i})dx\Big|
&\leq Cz_i^2\int_\om  |\tilde{c}_{i}|^{p-2}|\nabla\Phi||\nabla\tilde{c}_{i}|dx\\
&\leq C z_i^2\|\tilde{c}_{i}^{\frac{p-2}{2}}\|_{L^2}\|\nabla\Phi\|_{L^\infty}
\|\tilde{c}_{i}^{\frac{p-2}{2}}\nabla\tilde{c}_{i}\|_{L^2}\\
&\leq Cz_i^2\|\tilde{c}_{i}\|_{L^p}^{\frac{p-2}{2}}\Big(1+\sum_{j=1}^N\|\tilde{c}_j\|_{L^p}\Big)\|\tilde{c}_{i}^{\frac{p-2}{2}}\nabla\tilde{c}_{i}\|_{L^2}\\
&\leq C\int_\om \sum_{i=1}^N|\tilde{c}_{i}|^pdx+\sum_{i=1}^N\frac{z_i^2}{4}\int_\om \tilde{c}_{i}^{p-2}|\nabla\tilde{c}_{i}|^2dx+C.
\end{split}\end{equation}
By Cauchy-Schwarz inequality, Lemma \ref{lem-3.2}, and Sobolev embedding theorem,
\begin{equation}\begin{split}\label{3.23}
\Big|\int_\om \frac{z_i^2}{D_i}u\cdot\nabla\gamma_{i}\cdot|\tilde{c}_{i}|^{p-2}\tilde{c}_{i}dx\Big|
&\leq C\left(\int_\om |u|^2\right)^{1/2}\left(\int_\om |\tilde{c}_{i}|^{2(p-1)}\right)^{1/2}\\&\leq
C\left(\int_\om |\tilde{c}_{i}|^{\frac{p}{2}\cdot\frac{4(p-1)}{p}}
\right)^{\frac{p}{4(p-1)}\cdot\frac{2(p-1)}{p}}\\
&\leq C\left(\int_\om |\nabla\tilde{c}_{i}^{\frac{p}{2}}|^2
\right)^{\frac{(p-1)}{p}}\\
&\leq\frac{z_i^2}{4}\int_\om |\tilde{c}_{i}|^{p-2}|\nabla\tilde{c}_{i}|^2+C.
\end{split}\end{equation}
Substituting \eqref{new-3.22}, \eqref{3.22} and \eqref{3.23} into \eqref{3.16}, by Lemma \ref{lem-3.2}, we obtain
\[\begin{split}\sum_{i=1}^N\int_\om \frac{z_i^2}{D_i}|\tilde{c}_{i}|^pdx+\sum_{i=1}^N\int_0^t\int_\om z_i^2 |\tilde{c}_{i}|^{p-2}|\nabla\tilde{c}_{i}|^2dx\leq C\int_0^t\int_\om \sum_{i=1}^N |\tilde{c}_{i}|^pdx+C.\end{split}\]
Thus, we get \eqref{3.15} from Gronwall's inequality. And \eqref{3.15-2} follows from \eqref{3.15} and \eqref{3.27}.
\end{proof}

\begin{lemma}[$L^2$ estimate for $\omega$]\label{lem-3.4} Suppose that the same assumptions of Lemma \ref{lem-3.2} hold.
Assume that $u(0)\in H^1(\Omega)$, then we have
\begin{equation}\label{3.19}
\max_{t\in[0,T]}\|\omega(\cdot,t)\|_{L^2}\leq C(T).
\end{equation}
\end{lemma}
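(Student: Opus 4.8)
The plan is to carry out a direct $L^2$ energy estimate on the vorticity equation \eqref{omega}. First I would multiply \eqref{omega} by $\omega$ and integrate over $\Omega$. Since $\nabla\cdot u=0$ and $u\mid_{\partial\Omega}=0$, the transport term drops out after integration by parts: $\int_\om(u\nabla\omega)\omega\,dx=\frac12\int_\om u\cdot\nabla(\omega^2)\,dx=-\frac12\int_\om\omega^2\,\text{div}\,u\,dx=0$, where the boundary contribution vanishes because $u$ vanishes on $\partial\Omega$. This leaves the energy identity
\[
\frac12\frac{d}{dt}\int_\om\omega^2\,dx=-K\int_\om(\nabla^\bot\rho\cdot\nabla\Phi)\,\omega\,dx.
\]

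Next I would estimate the right-hand side. The factor $\nabla^\bot\rho=\nabla^\bot\big(\sum_{i=1}^N z_ic_i\big)$ involves one spatial derivative of the concentrations and is only controlled in $L^2$ in space, so I would pair it against $\omega\in L^2$ by H\"older's inequality while extracting $\nabla\Phi$ in $L^\infty$:
\[
\left|K\int_\om(\nabla^\bot\rho\cdot\nabla\Phi)\,\omega\,dx\right|\leq K\|\nabla\Phi\|_{L^\infty}\|\nabla\rho\|_{L^2}\|\omega\|_{L^2}.
\]
Here I invoke \eqref{3.15-2} of Lemma \ref{Lp}, which gives $\|\nabla\Phi(\cdot,t)\|_{L^\infty}\leq C(T)$ uniformly in $t$, together with $\|\nabla\rho\|_{L^2}\leq C\sum_{i=1}^N\|\nabla c_i\|_{L^2}$. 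Young's inequality then yields
\[
\frac{d}{dt}\|\omega\|_{L^2}^2\leq C\|\omega\|_{L^2}^2+C(T)\sum_{i=1}^N\|\nabla c_i(\cdot,t)\|_{L^2}^2.
\]

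Finally, the crucial observation is that the forcing on the right is integrable in time: the $L^2$ estimate \eqref{3.0} of Lemma \ref{lem-3.2} provides $\int_0^T\sum_{i=1}^N\|\nabla c_i(\cdot,t)\|_{L^2}^2\,dt\leq C(T)$, while $\omega_0=\text{curl}\,u(0)\in L^2$ since $u(0)\in H^1(\Omega)$. Gronwall's inequality then produces $\max_{t\in[0,T]}\|\omega(\cdot,t)\|_{L^2}^2\leq C(T)$, which is exactly \eqref{3.19}. I expect the only delicate point to be that $\nabla\rho$ admits no uniform-in-time bound, so the argument cannot close by a pointwise-in-time estimate; it succeeds precisely because the parabolic dissipation recorded in Lemma \ref{lem-3.2} supplies the time-integrated control of $\|\nabla c_i\|_{L^2}^2$, which, combined with the uniform $L^\infty$ bound on $\nabla\Phi$ from Lemma \ref{Lp}, gives an $L^1$-in-time forcing amenable to Gronwall.
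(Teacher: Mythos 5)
Your proposal is correct and follows essentially the same route as the paper: multiply \eqref{omega} by $\omega$, kill the transport term using $\nabla\cdot u=0$ and $u|_{\partial\Omega}=0$, bound the forcing via the uniform $L^\infty$ estimate \eqref{3.15-2} on $\nabla\Phi$ from Lemma \ref{Lp} paired with $\|\nabla\rho\|_{L^2}\leq C\sum_i\|\nabla c_i\|_{L^2}$, and close with Gronwall using the time-integrated dissipation $\int_0^T\sum_i\|\nabla c_i\|_{L^2}^2\,dt\leq C(T)$ from Lemma \ref{lem-3.2}. Your closing remark correctly identifies the key structural point — the forcing is only $L^1$ in time, not bounded pointwise, which is exactly why the argument relies on Lemma \ref{lem-3.2}'s dissipation term.
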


\begin{proof}
Multiplying \eqref{omega} by $\omega$, using \eqref{1.4} and \eqref{1.7}, we have
\[\begin{split}
\frac{d}{dt}\int_\om\omega^2dx&=-K\sum_{i=1}^Nz_i\int_\om\nabla^\bot c_i \nabla\Phi \omega dx\\&\leq C\int_\om\omega^2dx+C\sum_{i=1}^N\int_\om|\nabla \Phi|^2|\nabla c_i|^2dx\\
&\leq C\int_\om\omega^2dx+C\sum_{i=1}^N\int_\om |\nabla c_i|^2dx,\end{split}\] where we have used Lemma \ref{Lp} in the last inequality.
The desired estimate \eqref{3.19}  then follows from Gronwall's inequality and  Lemma \ref{lem-3.2}.
\end{proof}

\begin{lemma}[$L^p$ estimate for $\omega$]\label{lem-3.6} Suppose that the same assumptions of Lemma \ref{lem-3.2} hold.
Assume that $u(0)\in W^{1,3}(\Omega)$ and that $c_i(0)-\gamma_i\in H_0^1(\Omega)$, then we have
\begin{equation}\label{3.31}
\max_{t\in[0,T]}\|\omega(\cdot,t)\|_{L^3}\leq C(T).
\end{equation}
\begin{equation}\label{3.29}
\begin{aligned}
\max_{t\in[0,T]}\int_\Omega|\nabla\tilde{c}_{i}|^2
+\int_0^T\int_\Omega|\Delta\tilde{c}_{i}|^2
\leq C(T).
\end{aligned}
\end{equation}

\end{lemma}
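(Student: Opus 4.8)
The plan is to prove \eqref{3.29} first and only then deduce \eqref{3.31}, because the two bounds are \emph{a priori} coupled and the ordering is what breaks the apparent circularity. The naive route to an $H^1$ bound for $\tilde c_i$ produces a transport term that one would want to control by $\|u\|_{L^\infty}\lesssim\|\omega\|_{L^3}$, i.e. by the very estimate \eqref{3.31} not yet available; the whole point is to close \eqref{3.29} using only the $L^2$ vorticity bound of Lemma \ref{lem-3.4}.

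For \eqref{3.29} I would test the $\tilde c_i$-equation \eqref{3.34} with $-\Delta\tilde c_i$. Since $\tilde c_i|_{\partial\Omega}=0$ is time independent, this gives
\[
\frac12\frac{d}{dt}\|\nabla\tilde c_i\|_{L^2}^2 + D_i\|\Delta\tilde c_i\|_{L^2}^2
=\int_\om u\cdot\nabla\tilde c_i\,\Delta\tilde c_i\,dx
-D_iz_i\int_\om \text{div}(\tilde c_i\nabla\Phi)\,\Delta\tilde c_i\,dx
-\int_\om f_i\,\Delta\tilde c_i\,dx .
\]
The forcing term is handled by \eqref{fi}, which gives $\|f_i\|_{L^2}^2\le C(T)$, and then Young's inequality. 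For the drift term I write $\text{div}(\tilde c_i\nabla\Phi)=\nabla\tilde c_i\cdot\nabla\Phi-\varepsilon^{-1}\tilde c_i\rho$; the first piece is estimated with $\|\nabla\Phi\|_{L^\infty}\le C(T)$ from Lemma \ref{Lp}, and $\tilde c_i\rho$ is bounded in $L^2$ by Ladyzhenskaya's inequality $\|\tilde c_i\|_{L^4}^2\le C\|\tilde c_i\|_{L^2}\|\nabla\tilde c_i\|_{L^2}+C$ together with the uniform $L^p$ bound of Lemma \ref{Lp}. Both contribute $\tfrac{D_i}{4}\|\Delta\tilde c_i\|_{L^2}^2+C(T)(1+\|\nabla\tilde c_i\|_{L^2}^2)$.

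The key step is the transport term, which I do \emph{not} estimate directly. Instead I integrate by parts once, using $\nabla\cdot u=0$ and $u\cdot n=0$ on $\partial\Omega$ (the boundary terms drop because $\nabla\tilde c_i$ is normal on $\partial\Omega$), to obtain
\[
\int_\om u\cdot\nabla\tilde c_i\,\Delta\tilde c_i\,dx
=-\int_\om (\partial_k u_l)(\partial_l\tilde c_i)(\partial_k\tilde c_i)\,dx ,
\]
with summation over the repeated spatial indices $k,l$. This involves only $\nabla u$, and Lemma \ref{lem-3.4} supplies $\|\nabla u\|_{L^2}\le C\|\omega\|_{L^2}\le C(T)$. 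Combined with the two-dimensional inequality $\|\nabla\tilde c_i\|_{L^4}^2\le C\|\nabla\tilde c_i\|_{L^2}\bigl(\|\nabla\tilde c_i\|_{L^2}+\|\Delta\tilde c_i\|_{L^2}\bigr)$ (where elliptic regularity turns $\|\nabla^2\tilde c_i\|_{L^2}$ into $\|\Delta\tilde c_i\|_{L^2}$ since $\tilde c_i$ vanishes on $\partial\Omega$), the transport term is bounded by $C(T)\|\nabla\tilde c_i\|_{L^2}^2+\tfrac{D_i}{4}\|\Delta\tilde c_i\|_{L^2}^2$. Summing in $i$ and absorbing the Laplacian terms yields
\[
\frac{d}{dt}\sum_i\|\nabla\tilde c_i\|_{L^2}^2 + c\sum_i\|\Delta\tilde c_i\|_{L^2}^2
\le C(T)\Bigl(1+\sum_i\|\nabla\tilde c_i\|_{L^2}^2\Bigr),
\]
and Gronwall's inequality gives \eqref{3.29}; the initial norm is finite because $c_i(0)-\gamma_i\in H_0^1(\Omega)$.

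For \eqref{3.31} I would avoid a further energy estimate and instead use the representation \eqref{new-2.16} along the flow, which is volume preserving since $\nabla\cdot u=0$. The term $\omega_0(\xi_{-t}^t(x))$ has $L^3$ norm equal to $\|\omega_0\|_{L^3}\le C$ (finite since $u(0)\in W^{1,3}$) by the measure-preserving change of variables. For the source, Minkowski's inequality and volume preservation give
\[
\Bigl\|\int_0^t(\nabla^\bot\rho\cdot\nabla\Phi)(\xi,s)\,ds\Bigr\|_{L^3}
\le\int_0^t\|\nabla\Phi\|_{L^\infty}\,\|\nabla\rho\|_{L^3}\,ds .
\]
Using $\|\nabla\Phi\|_{L^\infty}\le C(T)$ from Lemma \ref{Lp} and $\|\nabla\rho\|_{L^3}\le C\sum_i\|\nabla\tilde c_i\|_{L^3}+C$, the two-dimensional Gagliardo--Nirenberg interpolation $\|\nabla\tilde c_i\|_{L^3}\le C\|\nabla\tilde c_i\|_{L^2}^{2/3}\|\Delta\tilde c_i\|_{L^2}^{1/3}+C\|\nabla\tilde c_i\|_{L^2}$ reduces everything to the norms already controlled by \eqref{3.29}: since $\|\nabla\tilde c_i\|_{L^2}\le C(T)$, Hölder in time gives $\int_0^t\|\Delta\tilde c_i\|_{L^2}^{1/3}\,ds\le\bigl(\int_0^T\|\Delta\tilde c_i\|_{L^2}^2\,ds\bigr)^{1/6}T^{5/6}\le C(T)$, so the whole integral is bounded by $C(T)$. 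This yields $\max_{t\in[0,T]}\|\omega(\cdot,t)\|_{L^3}\le C(T)$, which is \eqref{3.31}.
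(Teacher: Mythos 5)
Your proof is correct, and it diverges from the paper's in two places worth noting. For \eqref{3.29} the skeleton is the same (test \eqref{3.34} with $-\Delta\tilde c_i$, absorb, Gronwall), but you treat the transport term by integrating by parts to get $-\int_\om(\partial_k u_l)(\partial_l\tilde c_i)(\partial_k\tilde c_i)\,dx$, so that only $\|\nabla u\|_{L^2}\le C\|\omega\|_{L^2}$ enters; the paper instead estimates it directly as $\|u\|_{L^4}\|\nabla\tilde c_i\|_{L^4}\|\Delta\tilde c_i\|_{L^2}$, using $\|u\|_{L^4}\le C\|u\|_{H^1}$ and the interpolation $\|\nabla g\|_{L^4}\le\varepsilon\|\Delta g\|_{L^2}+C(\varepsilon)\|g\|_{L^2}$. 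Both routes hinge on exactly the same input (Lemma \ref{lem-3.4}), so your worry that the ``naive'' estimate would need $\|u\|_{L^\infty}$ and hence \eqref{3.31} is overstated --- there is no circularity to break either way --- but your antisymmetry trick is clean and, incidentally, more careful than the paper about the $\tilde c_i\Delta\Phi=-\varepsilon^{-1}\tilde c_i\rho$ piece of the drift, which the paper's display \eqref{3.35} glosses over. (One small point there: pairing $\tilde c_i\rho$ in $L^4\times L^4$ needs $\|\rho\|_{L^4}\le C(T)$, i.e.\ Lemma \ref{Lp} with exponent $4$; this is legitimate here because $c_i(0)-\gamma_i\in H_0^1(\om)\hookrightarrow L^q(\om)$ for every $q<\infty$, but it deserves a sentence.) For \eqref{3.31} your argument is genuinely different: you stay Lagrangian, using the representation \eqref{new-2.16}, volume preservation of the flow, and Minkowski's inequality to reduce matters to $\int_0^t\|\nabla\Phi\|_{L^\infty}\|\nabla\rho\|_{L^3}\,ds$, then Gagliardo--Nirenberg plus H\"older in time against $\int_0^T\|\Delta\tilde c_i\|_{L^2}^2\,dt$ from \eqref{3.29}; the paper stays Eulerian, multiplying \eqref{omega} by $|\omega|\omega$, bounding $\int_\om|\omega|^2|\nabla\tilde c_i|$ via $\|\omega\|_{L^2}\|\omega\|_{L^3}\|\nabla\tilde c_i\|_{L^6}$ and closing with Gronwall. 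Your route is linear in $\omega$ (no Gronwall for $\omega$ at all) and would give every $L^r$ norm, $r<\infty$, at once; its cost is that it presupposes a well-defined, measure-preserving flow map, i.e.\ $u$ Lipschitz in space --- harmless here since the a priori estimates are derived for strong solutions with $u\in\C([0,T];\C^{1,\alpha}(\bar\om))$, but it is an extra hypothesis the paper's purely energy-based argument does not use.
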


\begin{proof}
By Lemma \ref{lem-3.4} and the Sobolev embedding theorem,
\begin{equation}\label{3.33}
\|u\|_{C([0,T];L^r)}\leq C\|u\|_{C([0,T];H^1)}\leq C(T), \ \forall\ r\in(1,\infty).
\end{equation}
Multiplying \eqref{3.34} by $-\Delta\tilde{c}_{i}$, we have
\begin{equation}\label{3.35}
\begin{aligned}
\frac{1}{2}\frac{d}{dt}\int_\Omega|\nabla\tilde{c}_{i}|^2
+D_i\int_\Omega|\Delta\tilde{c}_{i}|^2
&=\int_\Omega(u\nabla\tilde{c}_{i}-D_{i}\text{div}(z_{i}\tilde{c}_{i}\nabla\Phi))
\Delta\tilde{c}_{i}-\int_\Omega f_i\Delta\tilde{c}_{i}\\&\leq\int_\Omega|u\nabla\tilde{c}_{i}\Delta\tilde{c}_{i}|
+C\int_\Omega|\nabla\tilde{c}_{i}|^2+C\int_\Omega f_i^2+\frac{D_i}{4}\int_\Omega|\Delta\tilde{c}_{i}|^2,
\end{aligned}
\end{equation}
where we have used Young's inequality and \eqref{3.15-2}. Recalling the inequality: $\forall\varepsilon>0$, $\exists C(\varepsilon)>0$ such that
\[\|\nabla g\|_{L^4}\leq \varepsilon\|\Delta g\|_{L^2}+C(\varepsilon)\|g\|_{L^2}, \forall g\in H^2(\Omega)\cap H_0^1(\Omega),\]
by H\"{o}lder's inequality and \eqref{3.33}, we get
\[\int_\Omega|u\nabla\tilde{c}_{i}\Delta\tilde{c}_{i}|\leq \|u\|_{L^4}\|\nabla\tilde{c}_{i}\|_{L^4}\|\Delta\tilde{c}_{i}\|_{L^2}\leq \varepsilon\|\Delta\tilde{c}_{i}\|_{L^2}^2+C(\varepsilon)\|\tilde{c}_{i}\|_{L^2}^2.\]
Substituting this inequality into \eqref{3.35} and choosing $\varepsilon\ll1$, we obtain
\begin{equation}\label{3.36}
\begin{aligned}
\int_\Omega|\nabla\tilde{c}_{i}|^2
+D_i\int_0^t\int_\Omega|\Delta\tilde{c}_{i}|^2
\leq \int_\Omega|\nabla\tilde{c}_{i}(0)|^2+C\int_0^t\int_\Omega f_i^2+C\int_0^t\int_\Omega |\tilde{c}_{i}|^2.
\end{aligned}
\end{equation} By \eqref{fi} and Lemma \ref{lem-3.2},
\[\begin{aligned}
\int_0^T\int_\Omega f_i^2\leq C\int_0^T\int_\Omega\Big(\sum_{i=1}^N|\tilde{c}_{i}|^2
+|u|^2+|\nabla\Phi|^2+C\Big)\leq C.\end{aligned}\]
It then follows from \eqref{3.36} that
\begin{equation}\label{3.37}
\begin{aligned}
\max_{t\in[0,T]}\int_\Omega|\nabla\tilde{c}_{i}|^2
+\int_0^T\int_\Omega|\Delta\tilde{c}_{i}|^2
\leq C(T).
\end{aligned}
\end{equation}

Multiplying \eqref{omega} by $|\omega|\omega$, by \eqref{3.15-2} and \eqref{3.19}, we have
\[\begin{split}
\frac{d}{3dt}\int_\om|\omega|^3dx&=-K\sum_{i=1}^Nz_i\int_\om\nabla^\bot c_i \nabla\Phi |\omega|\omega dx\\&\leq C\sum_{i=1}^N\int_\om|\omega|^2|\nabla \tilde{c}_{i}|dx +C\int_\om\omega^2dx\\
&\leq C\|\omega(\cdot,t)\|_{L^2}\|\omega(\cdot,t)\|_{L^3}\|\nabla\tilde{c}_{i}(\cdot,t)\|_{L^6}+C\\
&\leq C\int_\om|\omega|^3+\Big(\int_\om|\nabla \tilde{c}_{i}|^6\Big)^{\frac{1}{3}}+C\\
&\leq C\int_\om|\omega|^3+C\int_\om|\Delta \tilde{c}_{i}|^2+C\int_\om|\tilde{c}_{i}|^2+C.\end{split}\]
\eqref{3.31} then follows from Gronwall's inequality and \eqref{3.37}.
\end{proof}

\section{Global solution}\label{sec-4}
In this section, we show that the local solution obtained in section \ref{sec-2} can be extended to the global one by a contradiction argument.

\begin{proof}[Proof of Theorem \ref{thm-1}] Let $\T=\sup\{T\mid u\in \C([0,T];\C^{1,\alpha}(\bar{\Omega})),c_{i}\in W_q^{2,1}(Q_T)\}$. Assume that $\T<\infty$. By Lemma \ref{lem-3.6} and the Sobolev embedding theorem,
\begin{equation}\begin{split}
\|u\|_{\C(\overline{Q_{\T}})}&\leq C\|\omega\|_{\C([0,\T];L^3(\Omega))}\leq C,\\ \|\tilde{c}_{i}\|_{\C([0,\T];L^p(\Omega))}&\leq C\|\tilde{c}_{i}\|_{\C([0,\T];H^1(\Omega))}\leq C \ \text{ for any } 2<p<\infty.
\end{split}\end{equation}We rewrite \eqref{3.34} as
\begin{equation}\label{4.1}
\begin{aligned}
\partial_{t}\tilde{c}_{i}-D_{i}\Delta\tilde{c}_{i}
+(u-D_{i}z_{i}\nabla\Phi)\nabla\tilde{c}_{i}
=-\frac{D_{i}}{\varepsilon}z_{i}\tilde{c}_{i}\sum_{j=1}^Nz_jc_j+ f_i.
\end{aligned}
\end{equation}
Noting $f_i\in L^p(Q_{\T})$ for any $2<p<\infty$, applying the $L^p$ theory for \eqref{4.1}, we obtain $\tilde{c}_{i}\in W_p^{2,1}(Q_{\T})$, and
\[\|\tilde{c}_{i}\|_{\C([0,\T];W^{1,p}(\Omega))}\leq C.\]

To estimate $u$, as in the argument of local well-posedness, we note that along the path lines $\xi_t(x)$ defined by \eqref{2.10}, the vorticity $\omega$ satisfies \eqref{2.11}. Then as in \eqref{2.14}, we have
\[\|\omega\|_{\C([0,\T];\C^{\alpha}(\bar{\Omega}))}\leq C(\T).\]
Owing to the relation between $u$ and $\omega$ through the stream function $\theta$ defined by \eqref{2.15}, we have
\begin{equation*}
\| u\|_{\C([0,\T);\C^{1,\alpha}(\Omega))}\leq C(\T),
\end{equation*}
Notice for $\forall\ \varepsilon>0$
\begin{equation*}
\| u(\T-\varepsilon)\|_{\C^{1,\alpha}(\bar{\Omega})}+\| \tilde{c}_{i}(\T-\varepsilon)\|_{W^{1,p}(\Omega)}\leq C(\T),
\end{equation*}
where $C(\T)$ is independent of $\varepsilon$. Taking $(u(\T-\varepsilon),c_{i}(\T-\varepsilon))$ as the initial data, using the local well-posedness in Lemma \ref{local-wellposedness}, one can easily get a contradiction. Therefore, system \eqref{1.1}-\eqref{1.7} is globally well-posed.
\end{proof}

\section*{Acknowledgements}
Dapeng Du would like to thank Hongjie Dong  for wonderful discussions on the  fundamental solution of the heat equation. This work is supported by the Natural Science Foundation of Jilin Province (20210101144JC)
and National Natural Science Foundation of China under grant 11801067.

\end{document}